\theoremstyle{definition}
\newtheorem{Theorem}{Theorem}[section]
\newtheorem{Lemma}{Lemma}[section]
\newtheorem{Proposition}{Proposition}[section]
\newtheorem{Definition}{Definition}[section]
\newtheorem{Remark}{Remark}[section]
\def\acknowledgement{\par\addvspace{17pt}\small\rmfamily
	\trivlist\if!\ackname!\item[]\else
	\item[\hskip\labelsep
	{\bfseries\ackname}]\fi}
\def\ackname{Acknowledgements}
\begin{document}
	\title{{\LARGE\bf{Image Space Analysis to the generalized optimization problem on a local sphere}}}
	\author{Li-wen Zhou$^{a,b}$\footnote{E-mail address: zhouliwen@live.cn} , Min Tang $^{a}$, Ya-ling Yi $^{a}$ and Yao-Jia Zhang$^{a,b}$\\
		$^a${\small\it School of Sciences, Southwest Petroleum University, Chengdu, 610500, P.R. China}\\
		$^b${\small\it Institute for Artificial Intelligence, Southwest Petroleum University, Chengdu 610500, Sichuan, P. R. China}}
	\date{ }
	\maketitle
	\vspace*{-9mm}
	\begin{center}
		\begin{minipage}{5.6in}
			\noindent{\bf Abstract.} This paper introduces and studies the generalized optimization problem (for short, GOP) defined by the conic order relation on a local sphere. The existence of solution to this problem is studied by using image space analysis (for short, ISA), and a class of regular weak separation functions on the local sphere is established. Moreover, a Lagrangian-type sufficient optimality condition and a saddle-point-type necessary optimality condition for GOP is obtained by a second class of weak separation functions, which are based on the Gerstewitz function and the directional distance function. The problem is transformed into a solvable real-valued optimization problem using scalarization methods.
			\\ \ \\
			{\bf Keywords:} Generalized optimization problem; image space analysis; separation function; Lagrangian-type optimality condition; scalarisation
			\\ \  \\
			{\bf 2020 AMS Subject Classifications}: 49J40; 47J20.
		\end{minipage}
	\end{center}
	
	\section{Introduction} \noindent
	\setcounter{equation}{0}
	
In recent years, several important concepts from nonlinear analysis and optimization have been extended from Euclidean spaces to spherical settings, and it is natural to generalize the principles and methods of optimization from Euclidean spaces to the Euclidean sphere, enabling further progress in the investigation of convex theory, fixed point theory, variational inequalities, and related topics (see \cite{KW,UCC}). The motivations for this extension may be rooted in theoretical considerations or focused on the development of efficient algorithms (see \cite{Dsp,Drz,Sst}). In general, the sphere is a non-linear space where geodesics are parts of the longest arc, in contrast to Euclidean space, where geodesics are straight lines and the shortest path between any two points is always a straight line segment. However, these concepts and techniques can be extended from linear spaces to local spheres. In fact, over the past few decades, the concepts and techniques applicable to Euclidean space have expanded to the non-linear framework of the sphere (see \cite{KW,ZLW,LXB}). 
	
In particular, as we know, the extension of GOPs from Euclidean spaces to manifolds has some important advantages. For example, nonconvex problems can be translated into convex problems with appropriate Riemannian metrics. Additionally, from the perspective of Riemannian geometry, constrained problems may become unconstrained problems (see \cite{tr,UCC,jka}). This study is valuable because the sphere possesses a specific underlying algebraic structure that can be exploited to significantly reduce the cost of obtaining solutions. The key to addressing this problem lies in establishing an appropriate order relation through the use of cones. Therefore, proposing  order relation using cones in the tangent space of the sphere is essential for studying GOPs on the sphere. 
	
On the other hand, in the last decades, the ISA for constrained extremum problems has been of great interest in the academic and professional communities. Giannessi initially investigated vector optimization problems (VOPs) and vector variational inequalities (VVIs) starting from a general scheme based on a cone in finite-dimensional Euclidean space (see\cite{GF1,GF2,GF3}). Since then, ISA has emerged as a crucial approach and has been widely applied to various classes of optimization problems. Over the past decades, several theoretical aspects of GOPs have been extensively studied, including saddle points, Lagrange functions, optimality conditions, penalty methods, and dualities. This method provides a unified framework for the theoretical exploration of nonconvex, discontinuous, and nonsmooth constrained optimization problems, as well as variational inequalities. ISA can be applied to any infeasibility problem that can be expressed as a parameter system, and this parametric infeasibility can be equivalently transformed into the separation of two nonempty sets $\mathcal{K}$ and $\mathcal{H}$ in the image space, where set $\mathcal{K}$ consists of the images of the objective function and the constraint function, while set $\mathcal{H}$ determined by the nature of the constraints and is a convex cone. Moreover, the disjunction between two sets can be established by the level sets of a separation function. Thus, it is important to introduce a class of suitable separation functions in this problem. Currently, many scholars have constructed and proved various forms of separation functions. Li developed nonlinear weak and strong separation functions for constrained extremum problems by using directed distance functions (see \cite{LJH}). For the vector likelihood-variable inequality problem, Chen proposed four new classes of weak separation functions by applying indicator functions alongside oriented distance functions (see\cite{cjw}). Additionally, a class of nonlinear weak separation functions for constrained vector optimization problems was also proposed and proven by using the Gerstewitz function (see\cite{YMXL}).
	
Inspired by the above work and other references (see \cite{LJH,cjw,YMXL}), we focus on two classes of separation functions and derive the corresponding generalized Lagrange functions, and construct a sufficient condition for Lagrange-type optimality and a necessary condition for saddle-point-type optimality of the GOPs. Besides, we introduce scalarization methods to transform the problem into a solvable real-valued optimization problem.

The rest of this paper is organized as follows. Section 2 presents the necessary concepts, lemmas, and propositions. In Section 3, we establish the GOP by the definition of cone order relation on the local sphere. In Section 4, we give two classes of separation functions to get the existence of solutions to the GOP by the level set. Section 5 introduces the corresponding scalar minimization problem and establishes the foundation for subsequent algorithmic constructions and applications. Conclusions are presented at the end of this paper.
	
\section{Preliminaries}\noindent
	\setcounter{equation}{0}

In this section we recall some notations, definitions and basic properties about the geometry of the sphere used throughout this paper (see \cite{DCM}).

In this paper, unless otherwise stated, the symbols $X$, $Y$, and $Z$ are used to denote normed linear spaces. For simplicity, the norms of these spaces are denoted by $\|\cdot\|$, and the Euclidean inner product is denoted by $\langle \cdot , \cdot \rangle$. For a nonempty set $A\subset Y$, the closure, convex hull, complement, interior, and boundary of $A$ are denoted by $clA, $  $convA, $  $A^c, $ $int A,$  $bd A,$ respectively. 

Let $S^{2}=\{x=(x_1,...,x_{3})\in R^{3}{|}\parallel x\parallel=1\}$ be a sphere, $M$ be a local sphere (see \cite{FOP}). For any $x \in M\subset S^{2}$, the tangent space of $M$ at $x$ denoted by $T_x M$ is a linear subspace, and the tangent bundle of $M$ is defined by
$$TM=\bigcup_{x\in M} T_x M,$$
where $T_xM=\{v\in{R}^2{|}\langle x,v\rangle=0\}$. A vector field $V$ on $M$ is a mapping of $M$ into $TM$, which associates to each point $x\in M$ a vector $V(x) \in T_x M$. 

As considered in \cite{FOP}, the intrinsic distance on the sphere between two arbitrary points $x,y\in M$ is defined by
$$d(x,y)=\arccos{\langle x,y\rangle}.$$

For any given $p\in M$, the intrinsic distance function on the local sphere from $p$ is defined by:
$$d_q(p)=\arccos{\langle p,q\rangle}.$$

As considered in \cite{FOP}, it can be shown that the intrinsic distance $d(x,y)$ between two arbitrary points $x,y\in M$ is obtained by minimizing the arc length functional $L$
$$L(\gamma)=\int_a^b\parallel\gamma^{\prime}(t)\parallel dt.$$
over the set of all piecewise continuously differentiable curves $\gamma\;{:}\;[a,b]\to M$ joining $x$ to $y$, i.e., such that $x=\gamma(a)$ and $y=\gamma(b)$ . Moreover, $d$ is a distance in $M$ and $(M,d)$ is a complete metric space, so that 
\begin{itemize}
	\item[(i)]$d(x,y)\geq0$ for all $\forall x,y\in M$;
	\item[(ii)]$d(x,y)=0$ iff $x=y$; 
	\item[(iii)]$d(x,y)\leq\pi $ for all $\forall x,y\in M$;
	\item[(iv)]$d(x,y)=\pi $ iff $x=-y$.
\end{itemize}	 

As considered in \cite{FOP}, the geodesic is defined as the intersection curve of a plane though the origin of ${R}^3$ with the sphere $M,$ and the geodesic segment $\gamma\;{:}\;[a,b]\to M$ minimal if its arc length is equal the intrinsic distance between its end points, i.e.,  if $L(\gamma)=\operatorname{arccos}\langle\gamma(a),\gamma(b)\rangle $. It is said that $\gamma$ is a normalized geodesic
iff $\|\gamma^{\prime}\|=1$.  If $x\neq y$ and $x\neq-y$, then the unique segment of minimal normalized geodesic from to $x$ to $y$ is denoted by
$$\gamma_{xy}(t)=\left(\cos t-\frac{\langle x,y\rangle\mathrm{sin}t}{\sqrt{1-\langle x,y\rangle^2}}\right)x+\frac{\sin t}{\sqrt{1-\langle x,y\rangle^2}}y,\:\;\; t\in[0,d(x,y)]. $$


A vector field $V$ is said to be parallel along a smooth curve $\gamma\in M$ iff $\nabla_{\gamma^{\prime}}V=0$, where $\nabla$ is the Levi-Civita connection associated with $(M,\langle\cdot,\cdot\rangle)$. We use $\nabla$ to introduce an isometry $P_{\gamma,\cdot,\cdot},$ on the tangent bundle $TM$ along $\gamma$,  which is called the parallel transport and defined as
$$P_{\gamma,\gamma(b),\gamma(a)}(v)=V(\gamma(b)),\;\;\forall a,b\in R \;\; and\;\;  v\in T_{\gamma(a)}M,$$
where $V$ is the unique vector field satisfying $\nabla_{\gamma^{\prime}(t)}V=0$ for all $t$ and $V(\gamma(a))=v$. Without any confusion, $\gamma$ can be omitted when it is a minimal geodesic joining $x$ to $y$.

\begin{Definition}(\cite{FOP})\label{de2.1}
	The exponential mapping $\exp_p\;{:}\;T_pM\to M$ is defined by $\exp_p\nu=\gamma_\nu(1)$, where $\gamma_v$ is the geodesic defined by its initial position $p$,  with velocity $v$ at $p$. Hence,
	$$\exp_pv=\begin{cases}\cos{(\parallel v\parallel)p}+\sin{(\parallel v\parallel)}\frac{v}{\parallel v\parallel},&\quad v\in T_pM/\{0\},\\p,&\quad v=0.\end{cases}$$
	
	We have $\gamma_{t\nu}(1)=\gamma_\nu(t)$ for any values of $t$. Therefore, for all ${t}\in{R}$ we have
	$$\exp_ptv=\begin{cases}\cos{(t\parallel v\parallel)p}+\sin{(t\parallel v\parallel)}\frac{v}{\parallel v\parallel},&\quad v\in T_pM/\{0\},\\p,&\quad v=0.\end{cases}$$
	The inverse of the exponential mapping is given by 
	$$\exp_p^{-1} q=\begin{cases}\frac{\arccos\langle p,q\rangle}{\sqrt{1-\langle p,q\rangle^2}}(I-pp^{\mathrm{T}})q,&q\notin\{p,-p\},\\0,&q=p,\end{cases}$$
	and $\exp_p^{-1}q$ is the velocity $v\in T_pM$ at $p$ of the geodesic from $p$ to $q.$	
	The combination of the aforementioned equations yields the result that $d(p,q)=\parallel\exp_q^{-1}p\parallel$, where $p$ and $q$ are both elements of $M$. This is tantamount to stating that $d_q(p)=\parallel\exp_q^{-1}p\parallel $. The distance $d_{q}$ between the point $q$ and $p$, where $p\in M\setminus\{q,-q\}$, $d_{q}$ is twice differentiable at $p$.
\end{Definition}

\begin{Lemma}(\cite{DCM})\label{le2.1}
	Let $M$ be a local sphere, then the universal cover of $M$ is a convex geodesic space with respect to the induced length metric $d$.
\end{Lemma}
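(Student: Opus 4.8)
The plan is to verify, directly from the Riemannian structure that $M$ inherits as a submanifold of $S^{2}$, the two ingredients of a convex geodesic space—existence (and uniqueness) of minimizing geodesics joining any pair of points, and geodesic convexity of the ambient region—and then to carry these properties up to the universal cover through the covering projection.

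First I would exploit completeness. Since $(M,d)$ is a complete length space and, being a Riemannian submanifold of the sphere, is locally compact, the Hopf--Rinow theorem applies and shows that $M$ is a geodesic space: any two points are joined by a minimizing geodesic. For $x,y\in M$ with $x\neq\pm y$ this geodesic is precisely the explicit arc $\gamma_{xy}$ recorded above, and the property that $d(x,y)=\pi$ holds only for antipodal points guarantees that inside $M$ we stay strictly below the injectivity radius $\pi$ of $S^{2}$; consequently the minimizing geodesic between two such points is unique and varies smoothly with its endpoints through $\exp_p$ and $\exp_p^{-1}$ of Definition \ref{de2.1}. Because the convexity radius of the unit sphere equals $\pi/2$, every sufficiently small geodesic ball in $M$ is geodesically convex, and this is the local convexity to be globalized.

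Next I would pass to the universal cover $\pi:\widetilde M\to M$. Equipping $\widetilde M$ with the length metric induced by $d$ makes $\pi$ a local isometry, and $\widetilde M$ inherits both completeness and local compactness, so Hopf--Rinow again yields that $\widetilde M$ is a geodesic space. The purpose of passing to the cover is that $\widetilde M$ is simply connected: there geodesics lift uniquely, closed geodesics and branching minimizers are excluded, and the local convexity recorded above can be propagated to a global one. Concretely, I would run the standard monodromy/unfolding argument that a complete, simply connected space which is locally a convex geodesic space is globally a convex geodesic space, and conclude that any two points of $\widetilde M$ are joined by a unique minimizing geodesic lying in $\widetilde M$; that is exactly the convex geodesic space structure asserted.

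The main obstacle is precisely this local-to-global passage. On the round sphere positive curvature destroys global convexity (antipodal points and long arcs), so the local convexity coming from the $\pi/2$ convexity-radius estimate cannot be globalized on $M$ itself; simple connectedness of $\widetilde M$ must be used in an essential way to rule out the offending closed and non-minimizing geodesics. I would therefore first pin down, from the definition of local sphere in \cite{FOP}, the precise convex region of $S^{2}$ containing $M$ so that the normal-ball radius is uniformly bounded below, and only then carry out the unfolding step; the remaining verifications (smoothness of $\exp_p^{-1}$ and uniqueness below the injectivity radius) are routine given Definition \ref{de2.1}.
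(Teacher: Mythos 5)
You should first be aware that the paper contains no proof of this lemma to compare against: it is stated with a citation to \cite{DCM} and never argued internally, so your proposal has to stand on its own merits. It does not, because its pivotal step is false in positive curvature. You invoke ``the standard monodromy/unfolding argument that a complete, simply connected space which is locally a convex geodesic space is globally a convex geodesic space.'' That local-to-global principle is a Cartan--Hadamard-type theorem, valid under local \emph{nonpositive} curvature (locally CAT(0), or local Busemann convexity of the metric), and it fails on curvature $+1$ in both readings of ``locally convex.'' The round sphere $S^{2}$ itself is complete, simply connected, and every ball of radius below $\pi/2$ is geodesically convex, yet $S^{2}$ is not a convex geodesic space: antipodal points are joined by a continuum of minimizing geodesics, and the distance between two geodesics emanating from a point is locally concave rather than convex, so the Busemann-type local hypothesis is not even satisfied on small balls. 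A spherical cap of angular radius $r>\pi/2$ is a proper counterexample of the same kind: it is simply connected and locally convex, but the minimizing arc between two diametrically opposite boundary points (of length $2\pi-2r<\pi$) passes through the antipode of the cap's center and hence leaves the cap. Since for simply connected $M$ the universal cover is $M$ itself, your argument would ``prove'' the conclusion for any simply connected region of the sphere, which these examples refute; simple connectedness cannot substitute for a size restriction here.

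Consequently the entire content of the lemma must come from the definition of \emph{local sphere} in \cite{FOP} (in substance, a smallness or containment hypothesis guaranteeing uniqueness of minimizing geodesics, e.g.\ diameter strictly below $\pi$ or containment in a suitable convex region), which you explicitly defer as something to ``pin down'' later. That deferral is not a routine verification: without the hypothesis the statement is false, and with it the simply connected case is essentially immediate, so that the genuine work in the covering step is of a different nature than you describe --- namely checking, for non-simply-connected $M$ (an equatorial band, say), that geodesics lifted to $\widetilde M$ under the deck transformation group do not close up and remain unique for the induced length metric. Two smaller inaccuracies: the paper's property (iv) says $d(x,y)=\pi$ iff $x=-y$, so antipodal pairs are permitted in $M$ and your claim that one stays ``strictly below the injectivity radius $\pi$'' does not follow from the listed properties; and your appeal to the convexity radius $\pi/2$ yields geodesically convex balls but not local Busemann convexity, which is the hypothesis your unfolding argument would actually need.
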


\begin{Lemma}(\cite{DCM})\label{pr2.2}
	Let $M$ be a local sphere and $p\in M$. Then,  $\exp_p:T_pM\to M$ is a diffeomorphism. For any two points $p,q\in M$, if $d(p,q)<\pi $, there exists a unique minimal geodesic $\gamma:[0,1]\to M$ such that $\gamma_{p,q}(0)=p$ and $\gamma_{p,q}(1)=q$. Furthermore, there exists a unique minimal geodesic given by $\gamma_{p,q}(t)=\exp_pt\exp_p^{-1}q$ for all $t\in[0,1]$ joining $p$ to $q$.
\end{Lemma}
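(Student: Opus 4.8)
The plan is to establish the three assertions in sequence, drawing on the explicit expressions for $\exp_p$ and $\exp_p^{-1}$ from Definition \ref{de2.1} and on the convexity of the universal cover from Lemma \ref{le2.1}.

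First, to see that $\exp_p \colon T_pM \to M$ is a diffeomorphism, I would check smoothness and bijectivity separately. Smoothness of $\exp_p$ is a standard property of the geodesic exponential map, while the candidate inverse $\exp_p^{-1}$ is smooth on $M$ once the antipode $-p$ is excluded, the apparent singularities of the displayed formulas at $v = 0$ and $q = p$ being removable. Bijectivity then reduces to verifying the two inverse identities by direct substitution. For $\exp_p^{-1} \circ \exp_p = \mathrm{id}$, I would write $v = r u$ with $r = \|v\| \in [0, \pi)$ and $u$ a unit vector orthogonal to $p$; then $\langle p, \exp_p v \rangle = \cos r$, so $\arccos\langle p, \exp_p v\rangle = r$ and $\sqrt{1 - \langle p, \exp_p v\rangle^2} = \sin r$, while $(I - pp^{\mathrm{T}})p = 0$ and $(I - pp^{\mathrm{T}})u = u$ make the projection select exactly $\sin(r)u$; the prefactor $r/\sin r$ then returns $r u = v$. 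The identity $\exp_p \circ \exp_p^{-1} = \mathrm{id}$, which also yields surjectivity, follows symmetrically, and together these give the diffeomorphism.

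Second and third, for the minimal geodesic I would take the explicit candidate $\gamma_{p,q}(t) = \exp_p(t\exp_p^{-1}q)$. By the reparametrization identity $\gamma_{tv}(1) = \gamma_v(t)$ of Definition \ref{de2.1}, this curve is a geodesic, and its endpoints are $\gamma_{p,q}(0) = \exp_p 0 = p$ and $\gamma_{p,q}(1) = \exp_p(\exp_p^{-1}q) = q$ by the first part. Since $\gamma_v$ has constant speed $\|v\|$, its length over $[0,1]$ equals $\|\exp_p^{-1}q\| = d(p,q)$, which realizes the infimum defining $d$ and hence shows the geodesic is minimal. Uniqueness then combines the bijectivity of $\exp_p$ with Lemma \ref{le2.1}: the hypothesis $d(p,q) < \pi$ excludes the antipodal case $q = -p$, so $\exp_p^{-1}q$ is a single well-defined vector, and the convexity of the universal cover forbids any competing minimizing geodesic.

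The step I expect to be the main obstacle is uniqueness. The smoothness and the inverse identities are essentially bookkeeping with the formulas of Definition \ref{de2.1}, but excluding alternative minimal geodesics genuinely needs the geometric input of Lemma \ref{le2.1}. It is precisely the breakdown of this convexity at antipodal pairs, where $d = \pi$ and infinitely many great-circle arcs minimize, that forces the hypothesis $d(p,q) < \pi$; I would make sure the argument isolates exactly this threshold.
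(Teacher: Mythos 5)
The paper offers no proof of this lemma to compare against: it is quoted verbatim as a known result from \cite{DCM} (with the explicit formulas for $\exp_p$ and $\exp_p^{-1}$ taken from \cite{FOP} in Definition \ref{de2.1}), so your proposal can only be judged on its own merits. On those merits, most of it is sound: the verification of $\exp_p^{-1}\circ\exp_p=\mathrm{id}$ by writing $v=ru$ and computing $\langle p,\exp_pv\rangle=\cos r$, the symmetric identity $\exp_p\circ\exp_p^{-1}=\mathrm{id}$, and the minimality of $\gamma_{p,q}(t)=\exp_p t\exp_p^{-1}q$ via the constant-speed length computation $L(\gamma_{p,q})=\|\exp_p^{-1}q\|=\arccos\langle p,q\rangle=d(p,q)$ are all correct and are the natural route.

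There are, however, two genuine soft spots. First, the diffeomorphism claim: on the sphere, $\exp_p$ is periodic in $\|v\|$ and therefore never injective on the whole tangent plane; your computation silently restricts to $r=\|v\|\in[0,\pi)$, which is precisely where $\arccos\cos r=r$ holds, but you never justify replacing $T_pM$ by that ball. What your first paragraph actually proves is a smooth bijection between the open ball of radius $\pi$ and $S^2\setminus\{-p\}$; upgrading this to the statement as written requires the defining property of a \emph{local} sphere $M$ (namely that the relevant preimage $\exp_p^{-1}(M)$ sits inside that ball), and a complete proof must say so explicitly. Second, uniqueness: Lemma \ref{le2.1} asserts that the universal cover is a convex geodesic space, which gives \emph{existence} of connecting geodesics but does not by itself forbid two distinct minimizers — that would need unique geodesicity (a Busemann-type convexity), which you assert rather than derive, and you yourself flag this step as the obstacle. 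On the sphere the gap closes elementarily, without Lemma \ref{le2.1}: every geodesic is an arc of a great circle, the intersection of $S^2$ with a plane through the origin; if $0<d(p,q)<\pi$ then $p$ and $q$ are linearly independent and non-antipodal, so the plane through $0$, $p$, $q$ — hence the great circle — is unique, and of its two arcs joining $p$ to $q$ only the one of length $\arccos\langle p,q\rangle<\pi$ realizes $d(p,q)$. Substituting this argument for the appeal to convexity makes the uniqueness step airtight and isolates, exactly as you intended, why the threshold $d(p,q)<\pi$ is the right hypothesis (at $d=\pi$ the plane, and with it the minimizer, ceases to be unique).
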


\begin{Lemma}(\cite{DCM})\label{pr2.3}
	The exponential map and its inverse are continuous on a local sphere.	
\end{Lemma}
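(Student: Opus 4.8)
The plan is to read continuity directly off the closed-form expressions for $\exp_p$ and $\exp_p^{-1}$ recorded in Definition \ref{de2.1}, splitting each map into its generic region and its single exceptional point. On the complement of the exceptional point each map is a finite composition of continuous elementary operations (inner product, $\arccos$, $\sin$, $\cos$, the Euclidean norm, scalar multiplication, and the linear projection $I-pp^{\mathrm T}$) in which no denominator vanishes, so continuity there is automatic; all the work is concentrated at the two removable singularities. I would also remark that continuity of $\exp_p^{-1}$ is in fact immediate from Lemma \ref{pr2.2}, since a diffeomorphism possesses a smooth, hence continuous, inverse; but I prefer the self-contained estimate below so that the statement does not lean on the full differentiable structure.

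For $\exp_p$ the only point to check is $v=0$. Here I would let $\|v\|\to 0$ and observe that $\cos(\|v\|)p\to p$, while the tangential term is controlled by
\[
\left\| \sin(\|v\|)\tfrac{v}{\|v\|}\right\| = |\sin(\|v\|)| \longrightarrow 0 ,
\]
since $v/\|v\|$ is a unit vector. Hence $\exp_p v\to p=\exp_p 0$, which gives continuity at the origin and therefore on all of $T_pM$.

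The main obstacle is continuity of $\exp_p^{-1}$ at $q=p$, where the scalar coefficient takes the indeterminate form $0/0$. Writing $t=\langle p,q\rangle\to 1^-$, I would resolve this through the two asymptotics $\arccos t\sim\sqrt{2(1-t)}$ (from $\cos\theta=1-\tfrac{\theta^2}{2}+o(\theta^2)$) and $\sqrt{1-t^2}=\sqrt{(1-t)(1+t)}\sim\sqrt{2(1-t)}$, so that
\[
\frac{\arccos\langle p,q\rangle}{\sqrt{1-\langle p,q\rangle^2}}\longrightarrow 1 \qquad (q\to p).
\]
Since simultaneously $(I-pp^{\mathrm T})q\to (I-pp^{\mathrm T})p = p-\langle p,p\rangle p = 0$ because $\langle p,p\rangle=1$ on the sphere, the product tends to $0=\exp_p^{-1}p$, establishing continuity at $q=p$ and hence everywhere on $M\setminus\{-p\}$.

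Finally, I would note that both estimates are uniform enough in the base point $p$ to upgrade the per-point arguments to joint continuity of $(p,v)\mapsto\exp_p v$ on $TM$ and of $(p,q)\mapsto\exp_p^{-1}q$ on the relevant domain, which is the form in which the lemma will be used in later sections. The expected difficulty is entirely the $0/0$ limit for the inverse; once the asymptotic equivalence of $\arccos t$ and $\sqrt{1-t^2}$ near $t=1$ is in hand, the remainder is routine.
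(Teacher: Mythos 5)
Your argument is correct, but there is no in-paper proof to compare it against: the lemma is imported verbatim from \cite{DCM}, where continuity comes for free from the general theory (the exponential map of a Riemannian manifold is smooth jointly in the base point and the tangent vector, and on a local sphere $\exp_p$ is a diffeomorphism by Lemma \ref{pr2.2}, so its inverse is smooth, hence continuous). Your route is therefore genuinely different and more elementary: you read continuity directly off the closed-form expressions of Definition \ref{de2.1}, which keeps the statement independent of the smooth structure and of Lemma \ref{pr2.2} --- a point worth making, since quoting Lemma \ref{pr2.2} for this purpose, as your own side remark suggests, would make the present lemma redundant rather than circular. Both of your singular-point computations are sound; the one at $q=p$ can be shortened by substituting $\theta=\arccos\langle p,q\rangle$, so that $\sqrt{1-\langle p,q\rangle^{2}}=\sin\theta$ and the scalar coefficient is $\theta/\sin\theta\to 1$, or faster still by invoking the identity $\|\exp_p^{-1}q\|=\arccos\langle p,q\rangle=d(p,q)\to 0$ recorded at the end of Definition \ref{de2.1}, which settles continuity at $q=p$ in one line without any asymptotics. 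Your closing claim of joint continuity in $(p,v)$ and $(p,q)$ is true but asserted rather than proved; to make it airtight, note that the scalar coefficient depends on $(p,q)$ only through $t=\langle p,q\rangle$, that $\|(I-pp^{\mathrm{T}})q\|=\sqrt{1-t^{2}}\to 0$ as $t\to 1$ uniformly in $p$, and that the bound $|\sin(\|v\|)|$ at $v=0$ is independent of the base point. In sum: the paper's citation route buys smoothness (more than the lemma asserts) and full Riemannian generality, while your computation buys a self-contained, purely elementary verification that matches how the lemma is actually used later (only continuity is ever needed, e.g., alongside Lemma \ref{le2.2}).
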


\begin{Lemma}(\cite{LCL})\label{le2.2}
	Let $x_0\in M$ and $\{x_n\}\subset M$ such that $x_n\to x_0$, then the following assertion holds.
	\begin{itemize}
		\item[(i)]For any given $y\in M$, $\exp_{x_n}^{-1}y\to \exp_{x_0}^{-1}y$ and $\exp_y^{-1}x_n\to \exp_y^{-1}x_0$;
		
		\item[(ii)]If $\left\{v_{n}\right\}$ is a sequence such that $v_n\in T_{{x}_n}M$ and $v_{n}\to v_{0}$, then $v_{0}\in T_{x_{0}}M$;
		
		\item[(iii)]Given the sequence $\{u_n\}$ and $\{v_{n}\}$ satisfy $u_n,v_n\in T_{x_n}M$, if $u_{n}\to u_{0}$ and $v_{n}\to v_{0}$ with $u_0,v_0\in T_{x_0}M$, then 
		$$\langle u_n,v_n\rangle\to\langle u_0,v_0\rangle.$$
	\end{itemize}
\end{Lemma}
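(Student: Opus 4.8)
The plan is to establish the three assertions in sequence, deriving (i) from the explicit closed form of $\exp^{-1}$ in Definition \ref{de2.1} (equivalently, from the continuity of the inverse exponential map in Lemma \ref{pr2.3}), and deriving (ii) and (iii) from the single fact that the metric on $M$ is the restriction of the fixed ambient Euclidean inner product on $R^3$, which is a continuous bilinear form. This last observation is the crux: because the metric on $S^2$ does not genuinely depend on the base point, the two inner-product statements reduce to continuity of a fixed bilinear pairing.

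For (i), I would begin from
$$\exp_p^{-1}q=\frac{\arccos\langle p,q\rangle}{\sqrt{1-\langle p,q\rangle^2}}\,(I-pp^{\mathrm{T}})q,\qquad q\notin\{p,-p\},$$
and isolate the scalar factor $g(t)=\frac{\arccos t}{\sqrt{1-t^2}}$, which is continuous on $(-1,1)$. Because $M$ is a local sphere, Lemma \ref{pr2.2} gives $d(p,q)<\pi$, i.e. $\langle p,q\rangle>-1$, so the argument of $g$ never attains the singular value $-1$. Setting $p=x_n$ and $q=y$, the convergence $x_n\to x_0$ forces $\langle x_n,y\rangle\to\langle x_0,y\rangle$ by continuity of the inner product, and then continuity of $g$ together with continuity of the matrix map $x\mapsto I-xx^{\mathrm{T}}$ yields $\exp_{x_n}^{-1}y\to\exp_{x_0}^{-1}y$. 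The second limit $\exp_y^{-1}x_n\to\exp_y^{-1}x_0$ follows verbatim with $p=y$ held fixed and $q=x_n\to x_0$; alternatively both limits are immediate from Lemma \ref{pr2.3}.

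For (ii), I would use that $v_n\in T_{x_n}M$ encodes the scalar identity $\langle x_n,v_n\rangle=0$ for all $n$. Since $x_n\to x_0$ and $v_n\to v_0$ in $R^3$ and the inner product is a continuous bilinear form, passing to the limit gives $\langle x_0,v_0\rangle=\lim_n\langle x_n,v_n\rangle=0$, which is exactly $v_0\in T_{x_0}M$. For (iii), the same continuity makes the argument purely quantitative: from $\langle u_n,v_n\rangle-\langle u_0,v_0\rangle=\langle u_n,v_n-v_0\rangle+\langle u_n-u_0,v_0\rangle$ and the Cauchy--Schwarz inequality I obtain
$$|\langle u_n,v_n\rangle-\langle u_0,v_0\rangle|\le\|u_n\|\,\|v_n-v_0\|+\|v_0\|\,\|u_n-u_0\|,$$
and the right-hand side tends to $0$ since $\|u_n\|$ is bounded while $\|v_n-v_0\|\to0$ and $\|u_n-u_0\|\to0$.

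The one point that needs genuine care is (i), specifically keeping the scalar factor $g$ away from its singularities. The value $t=-1$ is excluded by the local-sphere hypothesis $d<\pi$, whereas the singularity at $t=1$ is only removable: when $y=x_0$ one has $g(t)\to1$ as $t\to1^-$ while the vector factor $(I-x_nx_n^{\mathrm{T}})y\to(I-x_0x_0^{\mathrm{T}})x_0=0$, so $\exp_{x_n}^{-1}y\to0=\exp_{x_0}^{-1}x_0$ and continuity persists. Once these degenerate configurations are handled, the remainder of the proof is routine continuity of elementary functions and of a fixed continuous bilinear form.
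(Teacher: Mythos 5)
Your proof is correct, but note that the paper itself offers no proof of this lemma at all: it is imported verbatim with the citation \cite{LCL}, where the analogous result is established for Hadamard manifolds by general Riemannian arguments (smooth dependence of geodesics on their endpoints, continuity of the metric tensor and of parallel transport). Your route is genuinely different and, for this setting, strictly more elementary: you exploit the fact that on $S^2$ every tangent-space inner product is the restriction of the single ambient Euclidean form on $R^3$, which collapses (ii) to passing to the limit in the identity $\langle x_n,v_n\rangle=0$ and reduces (iii) to the standard bilinearity estimate $|\langle u_n,v_n\rangle-\langle u_0,v_0\rangle|\le\|u_n\|\,\|v_n-v_0\|+\|v_0\|\,\|u_n-u_0\|$ with $\|u_n\|$ bounded --- none of which is available on a general manifold, where the metric varies with the base point. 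For (i) you work directly from the closed form of $\exp_p^{-1}q$ in Definition \ref{de2.1}, and the one delicate point you correctly identify and resolve is the removable singularity of $g(t)=\arccos t/\sqrt{1-t^2}$ at $t=1$: when $y=x_0$ one has $g(\langle x_n,y\rangle)\to1$ while $(I-x_nx_n^{\mathrm{T}})y\to(I-x_0x_0^{\mathrm{T}})x_0=0$, so the product converges to $0=\exp_{x_0}^{-1}x_0$ as required (and for $y\neq x_0$ the formula applies for all large $n$ since eventually $x_n\notin\{y,-y\}$). Two cosmetic caveats: Lemma \ref{pr2.2} does not literally assert $d(p,q)<\pi$ --- the exclusion of $\langle p,q\rangle=-1$ comes from the local-sphere hypothesis (no antipodal pairs in $M$), which Lemma \ref{pr2.2}'s diffeomorphism claim presupposes rather than proves; and for the first limit in (i) you could have invoked Lemma \ref{pr2.3} wholesale, though your explicit computation is what actually justifies that continuity claim on the sphere. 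What the cited reference's approach buys is generality (it covers all Hadamard manifolds); what yours buys is a short, self-contained, checkable argument tailored to the spherical case the paper actually uses.
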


\begin{Definition}(\cite{UCC})\label{de2.2}
	A subset $C\subset M$ is said to be geodesic convex iff for any two points $x,y\in C$, the geodesic joining $x$ to $y$ is contained in $C$, that is, if $\gamma{:}[a,b]\to M$ is a geodesic such that $x=\gamma(a)$ and $y=\gamma(b)$, then
	$$\gamma((1-t)a+tb)\in C,$$
	for all $t\in[0,1]$.
\end{Definition}

\begin{Remark}
	On the local sphere $M$, a subset $C\subset M$ is geodesic convex iff
	$\exp_x t\exp_x^{-1}y\in C,$
	for all $x,y\in C$ and $t\in[0,1]$.
\end{Remark}

\begin{Definition}(\cite{YMXL})
	For any mapping $f:E\to{R}$ and $\alpha\in{R}$, the set
	$$\operatorname{lev}_{\geq(>)\alpha}f=\{x\in E|f(x)\geq(>)\alpha\} $$
	is called nonnegative (positive) level set of $f$.
\end{Definition}

\begin{Definition}(\cite{YMXL})\label{de2.3}
	Let $p$ be an point on $M$ and $D_p$ be a cone on $T_p M$. The polar cone of $D\subseteq T_pM$ is defined as follow:
	$$D_p^*=\{a\in T_pM\;|\;\langle a,b\rangle\geq0,\;\;\forall b\in D_p\},$$
	where $\langle\cdot,\cdot\rangle$ is the scalar product on the tangent space $T_{p}M$ is the scalar product on the tangent space.
\end{Definition}

\begin{Definition}(\cite{CJ})\label{de2.4}
	For any given set $A\subseteq Y$, the oriented distance function of $A$ is the function $\Delta_A\colon Y\to{R}\cup\{\pm\infty\}$ which is defined by
	$$\Delta_A(y)=d_A(y)-d_{Y\setminus A}(y),$$
	where $d_{A}(y)=\inf_{z\in A}\|y-z\|,\;\;d_{\phi}(y)=+\infty$.
\end{Definition}

\begin{Lemma}(\cite{CJ})\label{pr2.4}
	If the set $A$ is nonempty and $A\neq Y$, then
	\begin{itemize}
		\item[(i)]$\Delta_{A}$ is real valued;
		
		\item[(ii)]$\Delta_{A}$ is 1-Lipschitzian;
		
		\item[(iii)]$\Delta_A(y)<0$ for every $y\in\text{int}A$;
		
		\item[(iv)]$\Delta_A(y)=0$ for every $y\in \text{bd}A$;
		
		\item[(v)]$\Delta_A(y)>0$ for every $y\in \text{int}A^c$;
		
		\item[(vi)]if $A$ is a cone, then $\Delta_{A}$ is positively homogeneous;
		
		\item[(vii)]if $A$ is convex, then $\Delta_{A}$ is convex;
		
		\item[(viii)]if $A$ is a closed convex cone, then $\Delta_{A}$ is nonincreasing with respect to the order relation induced on $Y$, i.e., the following is true: if $y_1,y_2\in Y$, then
		$$y_1-y_2\in A\Rightarrow\Delta_A(y_1)\leq\Delta_A(y_2);$$
		if $A$ has a nonempty interior, then
		$$y_1-y_2\in \text{int}A\Rightarrow\Delta_A(y_1)<\Delta_A(y_2).$$
	\end{itemize}
\end{Lemma}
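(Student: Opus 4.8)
The plan is to dispatch the pointwise properties first and then isolate the two genuinely delicate items, the Lipschitz estimate (ii) and the convexity (vii). Throughout I would record two structural facts that drive everything: since $A\neq\emptyset$ and $A\neq Y$, both $d_A$ and $d_{Y\setminus A}$ are finite; and since $d_A(y)=d_{\mathrm{cl}A}(y)$ vanishes exactly on $\mathrm{cl}A$ while $d_{Y\setminus A}$ vanishes exactly on $\mathrm{cl}(Y\setminus A)$, with $\mathrm{cl}A\cup\mathrm{cl}(Y\setminus A)=Y$, at most one of the two terms is strictly positive at any given point. Item (i) is then immediate from finiteness. For (iii)--(v) I would read the sign of $\Delta_A$ off this dichotomy: on $\mathrm{int}A$ one has $d_A=0$ and $d_{Y\setminus A}>0$; on $\mathrm{bd}A$ both distances vanish; and on $\mathrm{int}A^c$ one has $d_{Y\setminus A}=0$ and $d_A>0$. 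For (vi) I would use that the complement of a cone is again a cone, so for $\lambda>0$ the substitution $z\mapsto z/\lambda$ gives $d_A(\lambda y)=\lambda d_A(y)$ and likewise for $d_{Y\setminus A}$, whence $\Delta_A(\lambda y)=\lambda\Delta_A(y)$.

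The first real obstacle is (ii). Since $d_A$ and $d_{Y\setminus A}$ are each $1$-Lipschitz, a naive triangle-inequality argument only yields a Lipschitz constant $2$, so I would argue more carefully, proving $\Delta_A(y_1)-\Delta_A(y_2)\le\|y_1-y_2\|$ by cases according to the membership of $y_1,y_2$ in $A$ or in $A^c$ (the two-sided bound then follows by symmetry). All cases except the one with $y_1\in A^c$ and $y_2\in A$ reduce at once to the $1$-Lipschitz property of a single distance function. The remaining case is the crux: here $\Delta_A(y_1)=d_A(y_1)$ and $\Delta_A(y_2)=-d_{Y\setminus A}(y_2)$, so I must show $d_A(y_1)+d_{Y\setminus A}(y_2)\le\|y_1-y_2\|$. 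For this I would travel along the segment $g(t)=(1-t)y_1+ty_2$ and set $t^{*}=\sup\{t:g(t)\in\mathrm{cl}(Y\setminus A)\}$; continuity together with $\mathrm{cl}A\cup\mathrm{cl}(Y\setminus A)=Y$ forces $w:=g(t^{*})\in\mathrm{bd}A$, so that $d_A(y_1)\le\|y_1-w\|=t^{*}\|y_1-y_2\|$ and $d_{Y\setminus A}(y_2)\le\|y_2-w\|=(1-t^{*})\|y_1-y_2\|$; adding these gives exactly the required bound.

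The second obstacle is (vii). Here $d_A$ is convex because $A$ is convex, but $-d_{Y\setminus A}$ is not obviously convex, so I would not attempt to add the two pieces directly. Instead I would pass to the support-function representation, proving that for closed convex $A$
$$\Delta_A(y)=\sup_{\|\xi\|_{*}=1}\bigl(\langle\xi,y\rangle-\sigma_A(\xi)\bigr),\qquad \sigma_A(\xi)=\sup_{a\in A}\langle\xi,a\rangle,$$
the reduction to $\mathrm{cl}A$ being harmless since $d_A=d_{\mathrm{cl}A}$. Once this identity is in hand, $\Delta_A$ is a supremum of affine functions of $y$ and is therefore convex. Establishing the identity itself—in particular the restriction of $\xi$ to the unit sphere, which is what produces the correct negative values inside $A$—is the technical heart of this item.

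Finally, for (viii) I would exploit that a convex cone is closed under addition, so $y_1-y_2\in A$ gives $A+(y_1-y_2)\subseteq A$ and, dually, $(Y\setminus A)-(y_1-y_2)\subseteq Y\setminus A$. Translating the infima that define the two distances then yields $d_A(y_1)\le d_A(y_2)$ and $d_{Y\setminus A}(y_1)\ge d_{Y\setminus A}(y_2)$ simultaneously, and subtracting gives $\Delta_A(y_1)\le\Delta_A(y_2)$. For the strict statement, $y_1-y_2\in\mathrm{int}A$ provides a ball of room, i.e. $A+(y_1-y_2)+\varepsilon B\subseteq A$ for some $\varepsilon>0$; choosing the perturbation to point toward a near-optimal $a$ upgrades one of the two inequalities to a strict one with a quantitative gap, whence $\Delta_A(y_1)<\Delta_A(y_2)$. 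I expect (ii) and (vii) to be the only steps requiring real work, with the boundary-crossing argument and the support-function identity being their respective cruxes.
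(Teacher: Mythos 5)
The paper itself contains no proof of this lemma: it is imported verbatim from the cited reference [CJ] (the properties go back to Hiriart-Urruty's oriented distance function), so your argument can only be measured against the standard literature proofs, not against anything in the text. Measured that way, most of your proposal is sound. The dichotomy $\mathrm{cl}A\cup\mathrm{cl}(Y\setminus A)=Y$ correctly drives (i) and (iii)--(v); (vi) is fine once "the complement of a cone is a cone" is read as scaling invariance $\lambda(Y\setminus A)=Y\setminus A$ for $\lambda>0$; and your treatment of the hardest item, the Lipschitz bound (ii), is complete and correct: the boundary-crossing point $w=g(t^{*})$ lies in $\mathrm{cl}A\cap\mathrm{cl}(Y\setminus A)$ because $Y\setminus\mathrm{cl}(Y\setminus A)=\mathrm{int}A$, and adding $d_A(y_1)\le t^{*}\|y_1-y_2\|$ and $d_{Y\setminus A}(y_2)\le(1-t^{*})\|y_1-y_2\|$ is exactly the standard way to beat the naive constant $2$. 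The nonstrict half of (viii) via $A+A\subseteq A$ and translation of both infima is also correct.

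There are, however, two genuine soft spots. First, in (vii) the claim that "the reduction to $\mathrm{cl}A$ is harmless since $d_A=d_{\mathrm{cl}A}$" is not a proof: $\Delta_A$ also contains $d_{Y\setminus A}$, and $\Delta_A\neq\Delta_{\mathrm{cl}A}$ in general. Take $Y=\ell^2$, $B$ the closed unit ball and $A=c_{00}\cap B$: then $A$ is convex with empty interior, $\mathrm{cl}A=B$, so $\Delta_A=d_B\ge 0$ everywhere while $\Delta_{\mathrm{cl}A}<0$ on $\mathrm{int}B$. The repair is short: if $\mathrm{int}A=\emptyset$ then $Y\setminus A$ is dense, $d_{Y\setminus A}\equiv 0$ and $\Delta_A=d_A$ is convex outright; if $\mathrm{int}A\neq\emptyset$ then $\mathrm{int}\,\mathrm{cl}A=\mathrm{int}A$ for convex sets, hence $\Delta_A=\Delta_{\mathrm{cl}A}$, and $\mathrm{cl}A\neq Y$ follows by separating a point of $Y\setminus A$ from the open convex set $\mathrm{int}A$. (In the only place the paper uses the lemma, $\Delta_{R_+}$ on $R$, the issue is invisible, but your proof is pitched in a general normed space.) Second, the strict part of (viii) is only gestured at: "perturbing toward a near-optimal $a$" breaks down when $d_A(y_2)<\varepsilon$ or $y_2\in A$, where no such $a$ exists or the move overshoots into $A$. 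Two clean completions: (a) since $y_1-(y_2+z)\in A$ for every $\|z\|\le\varepsilon$, the nonstrict part gives $\Delta_A(y_1)\le\Delta_A(y_2+z)$; equality $\Delta_A(y_1)=\Delta_A(y_2)$ would make $y_2$ a local, hence by your (vii) global, minimizer of $\Delta_A$, contradicting $\Delta_A(ta)=t\,\Delta_A(a)\to-\infty$ for $a\in\mathrm{int}A$ by (iii) and (vi); or (b) specialize your support representation to the cone case, $\Delta_A(y)=\sup\{\langle\xi,y\rangle:\xi\in A^{\circ},\ \|\xi\|_{*}=1\}$ with $A^{\circ}$ the negative polar, and note that $y_1-y_2+\varepsilon B\subseteq A$ forces $\langle\xi,y_1-y_2\rangle\le-\varepsilon$ for every such $\xi$, yielding the quantitative bound $\Delta_A(y_1)\le\Delta_A(y_2)-\varepsilon$. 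With these two patches your proof is complete and, in structure, matches the classical arguments of Hiriart-Urruty and Zaffaroni that [CJ] relies on.
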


\begin{Definition}(\cite{GB})\label{de2.5}
	Let $E$ be a nonempty subset of $Y,$ and $K\subseteq E$ be a pointed, closed, and convex cone with nonempty interior. For any given $q\in-\text{int}K$, a mapping $\xi_{K,q}\colon E\to{R}$ denoted by
	$$\xi_{K,q}(y)=\min\{t\in{R} \;{|}\;y\in tq+K\}$$
	is called Gerstewitz function.
\end{Definition}

\begin{Lemma}(\cite{GB})\label{pr2.5}
	For any given $q\in-\text{int}K$, $y\in E$, and $r\in{R}$, the following conclusion holds:
	\begin{itemize}
		\item[(i)]$\xi_{K,q}(y)<r\Leftrightarrow y\in rq+\text{int}K$;
		
		\item[(ii)]$\xi_{K,q}(y)\leq r\Leftrightarrow y\in rq+K$;
		
		\item[(iii)]$\xi_{K.q}(y)=r\Leftrightarrow y\in rq+\mathrm{bd}K$;
		
		\item[(iii)]$\xi_{K,q}(y)$ is convex;
		
		\item[(v)]$\xi_{K,q}(y)$ is Lipschitz continuous;
		
		\item[(vi)]$\xi_{K,q}(y)$ is subadditive.
	\end{itemize}
\end{Lemma}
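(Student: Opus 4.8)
The plan is to derive every item from a single structural observation about the scalar parameter set, so that (ii) comes essentially for free and the remaining statements follow by elementary cone arithmetic. First I would fix $y\in E$ and study the set $A_y=\{t\in{R}\mid y-tq\in K\}$, whose infimum is by definition $\xi_{K,q}(y)$. Using $-q\in\text{int}K$, I would show that $A_y$ is a closed half-line $[\xi_{K,q}(y),+\infty)$: monotonicity holds because for $t_1\in A_y$ and $s\geq 0$ one has $y-(t_1+s)q=(y-t_1q)+s(-q)\in K+K\subseteq K$ since $K$ is a convex cone; closedness follows from closedness of $K$ and continuity of $t\mapsto y-tq$; boundedness below uses pointedness of $K$, for otherwise $y-tq\in K$ for all $t$ would force $-q\in K\cap(-K)=\{0\}$ after dividing by $|t|$ and passing to the limit; and nonemptiness uses $-q\in\text{int}K$ to absorb $y$ for $t$ large. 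This immediately yields (ii), since $\xi_{K,q}(y)=\min A_y$ gives $\xi_{K,q}(y)\leq r\Leftrightarrow r\in A_y\Leftrightarrow y-rq\in K\Leftrightarrow y\in rq+K$.

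For (i) I would combine the half-line structure with the identity $K+\text{int}K=\text{int}K$, valid for any convex cone with nonempty interior. If $\xi_{K,q}(y)<r$, pick $t<r$ with $y-tq\in K$; then $y-rq=(y-tq)+(r-t)(-q)\in K+\text{int}K=\text{int}K$. Conversely, if $y-rq\in\text{int}K$, openness lets me write $y-(r-\varepsilon)q=(y-rq)+\varepsilon q\in K$ for small $\varepsilon>0$, whence $\xi_{K,q}(y)\leq r-\varepsilon<r$. Item (iii) then follows by combining (i) and (ii): $\xi_{K,q}(y)=r$ is equivalent to $\xi_{K,q}(y)\leq r$ together with $\xi_{K,q}(y)\not<r$, i.e. to $y-rq\in K\setminus\text{int}K=\text{bd}K$, using that $K$ is closed.

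The convexity (iv) and subadditivity (vi) I would read off directly from (ii). Given $y_1,y_2$ with $r_i:=\xi_{K,q}(y_i)$, item (ii) gives $y_i-r_iq\in K$. For convexity, taking a convex combination and using convexity of $K$ shows $(\lambda y_1+(1-\lambda)y_2)-(\lambda r_1+(1-\lambda)r_2)q\in K$, so (ii) yields $\xi_{K,q}(\lambda y_1+(1-\lambda)y_2)\leq\lambda r_1+(1-\lambda)r_2$. For subadditivity, summing and using $K+K\subseteq K$ gives $(y_1+y_2)-(r_1+r_2)q\in K$, hence $\xi_{K,q}(y_1+y_2)\leq r_1+r_2$.

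The main obstacle is the Lipschitz property (v), and here the key is to exploit that $-q$ is an interior point of $K$. Since $-q\in\text{int}K$, there is $\rho>0$ such that the closed ball of radius $\rho$ about $-q$ is contained in $K$. For any $h\neq 0$ (the case $h=0$ being trivial since $\xi_{K,q}(0)=0$), setting $t=\|h\|/\rho$ gives $h-tq=t\bigl(-q+\rho h/\|h\|\bigr)\in K$, because $-q+\rho h/\|h\|$ lies in that ball and $K$ is a cone with $t>0$; hence $\xi_{K,q}(h)\leq\|h\|/\rho$. Combining this bound with subadditivity yields $\xi_{K,q}(y_1)-\xi_{K,q}(y_2)\leq\xi_{K,q}(y_1-y_2)\leq\|y_1-y_2\|/\rho$, and interchanging $y_1$ and $y_2$ gives $|\xi_{K,q}(y_1)-\xi_{K,q}(y_2)|\leq\|y_1-y_2\|/\rho$, so $\xi_{K,q}$ is Lipschitz with constant $1/\rho$. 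The delicate points to check carefully are the existence of the radius $\rho$ and the direction of every inequality forced by the convention $q\in-\text{int}K$, which is exactly where sign errors are easiest to make.
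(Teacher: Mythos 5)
Your proposal is correct, but note that the paper itself offers no proof of this lemma at all: it is stated as a known result and attributed to the reference \cite{GB} (the properties go back to Gerstewitz/Gerth--Weidner-type nonconvex separation functionals), so there is no in-paper argument to compare against. Your self-contained proof follows the standard route and all the key steps check out: the half-line structure of $A_y=\{t\mid y-tq\in K\}$ (monotone by $K+K\subseteq K$, closed by closedness of $K$, nonempty by absorbing $y$ with $-q\in\inte K$, bounded below by pointedness) gives (ii) with the minimum attained, (i) follows from $K+\inte K=\inte K$ plus the openness argument, (iii) from $K\setminus\inte K=\mathrm{bd}\,K$, and (iv), (vi) drop out of (ii) by convexity and additivity of $K$; your Lipschitz argument via the ball $B(-q,\rho)\subseteq K$, the bound $\xi_{K,q}(h)\leq\|h\|/\rho$, and subadditivity is exactly the classical proof and justifies (v) with constant $1/\rho$. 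Two trivial blemishes: in the boundedness-below step, dividing $y-tq\in K$ by $|t|=-t$ and letting $t\to-\infty$ yields $q\in K$ (not $-q\in K$) in the limit; combined with $-q\in\inte K\subseteq K$ and pointedness this still gives the contradiction $q=0$, so the conclusion stands, but the sign should be stated correctly. Also, for (iv)--(vi) to make literal sense you implicitly treat $\xi_{K,q}$ as defined on all of $Y$ (so that $y_1-y_2$, sums, and convex combinations stay in the domain); this sloppiness is inherited from the paper's own statement, where $E$ is an arbitrary nonempty subset, and is worth a one-line remark rather than a repair.
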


	\section{A generalized optimization problem}\noindent
\setcounter{equation}{0}

We establish a GOP on a local sphere by a cone order relation on the tangent space. Let $p$ be a given point in $M$ and $C_p$ be a given cone on the tangent space $T_p M$. In the following, we assume that $C_p$ is a closed convex cone and $intC_p\neq\emptyset $. For any $x\in M$, we define 
	$$C_x=P_{x,p}C_p=\{c_x\in T_xM\;|c_x=P_{x,p}c_p,\forall c_p\in C_p\}$$
	by the parallel transport from $p$ to $x$, and let 
	$$\tilde{C}_x=\{\exp_xc_x\;|\;\forall c_x\in C_x\}$$
	be a subset of $M$ by the exponential mapping.
	
	Now we can compare two points $x,y\in M$ with the given cone $C_p$. From now on, $C_x$ is a convex with a given point $p\in M$ and a convex cone $C_p\subset T_pM$.
	
	\begin{Definition}\label{de3.1}
		Let $x$, $y$ be any given points on $M$. It is said that
		$$y <_{\tilde{C}_x} x$$
		iff $\exp_x^{-1}y\in C_x\setminus\{0\}$, and
		$$y \not<_{\tilde{C}_x} x$$
		iff $\exp_x^{-1}y\notin C_x\setminus\{0\}$.
	\end{Definition}
	
	\begin{Remark}
		Due to the special curvature properties of the local sphere, consider two points on the sphere $p$, $q$ and their tangent spaces $T_p M$ and $T_q M$ respectively. For any given vector $v\in C_p$, there exiists a certain geodesic $\gamma$ parallel to $v$ from point $p$ to point $q$, the resulting vector $v^{\prime}$ is obtained which, if it belongs to the tangent space $T_q M$ of of the cone in the tangent space $C_p$, can be compared with the relation between the magnitudes of p and q to establish the order relation.
		Thus, we can observe that the order relation on the local sphere $M$ is equivalent to the order relation $T_x M$ on the local sphere with respect to $C_x$ on the local sphere. The latter is often mentioned in the field of VOPs and VVIs. That is
		$$y<_{\tilde{C}_x}x\Leftrightarrow0<_{C_x\setminus\{0\}}\exp_x^{-1}y\Leftrightarrow \exp_x^{-1}y\in C_x\setminus\{0\},$$
		$$y\not<_{\tilde{C}_x}x\Leftrightarrow0\not<_{C_x\setminus\{0\}}\exp_x^{-1}y\Leftrightarrow \exp_x^{-1}y\notin C_x\setminus\{0\}.$$
	\end{Remark}
	
	Let us consider the function $f:M\to M$ and vector value function $g=(g_1,g_2,...,g_l): M\to R^l$, the locally spherical GOP with respect to the cone $C_p\subset T_pM$ is introduced as follow,
	\begin{equation}\label{question}
		\begin{aligned}&\mbox{min}_{C_p}f(x),\\&\mathrm\;{s.t.}\;x\in K= \{ x \in M \mid g(x) \in R_+^l \},\end{aligned}
	\end{equation}
	where $\min_{C_p}$ is the minimum with respect to the cone $C_p$, and $g(x) \in R_+^l$ means every components $g_i(x)\geq0\;\;(i=1,...,l)$.
	
	Next, we define the notion of an efficient solution to (\ref{question}).
	
	\begin{Definition}\label{de3.2}
		A feasible point $y\in K$ is called an efficient solution of \eqref{question} iff
		\begin{equation}\label{EQ1}
			f(x)\not<_{\tilde{C}_{f(y)}} f(y),\;\;\forall x\in K.
		\end{equation}
		The inequality here is equivalent to $f(y)-f(x)\notin C_{f(y)}\setminus\{0\}$, where
		$$\tilde{C}_{f(y)}=\{\exp_{f(y)}c_{f(y)}\;|\;\forall c_{f(y)}\in C_{f(y)}\},$$
		and 
		$$C_{f(y)}=P_{f(y),p}C_{p},$$
		where $C_{f(y)}$ is a convex cone. 
		Inequality \eqref{EQ1} means
		\begin{equation}\label{EQ3}
			\exp_{f(y)}^{-1}f(x)\notin C_{f(y)}\setminus\{0\}, \;\;\forall x\in K.
		\end{equation}
	\end{Definition}
	
	It is trivial to note that \eqref{EQ1} is satisfied iff for any $x\in M$, $g(x)\in R_+^l$
	\begin{equation}\label{EQ4}
		\exp_{f(y)}^{-1}f(x)\geq_{{C}_{f(y)}}0,
	\end{equation}
	is impossible.
	
	\begin{Remark}
		If $M$ reduces to an Euclidean space $R^n$ and $T_x M=M=R^n$ for all $x\in M$, then problem \eqref{question} becomes a classical VOP.
	\end{Remark}
	
	In order to study the optimality conditions for the solution of the GOP \eqref{question}, by using the ISA, we transform the existence of solution to \eqref{question} into the separation of two suitable subsets in the image space.
	
	From the previous definition, if $y$ is an efficient solution of problem \eqref{question} iff
	\begin{equation}
		\exp_{f(y)}^{-1}f(x)\in C_{f(y)}\setminus\{0\}, \;g(x)\in R_+^l,\;\;\forall x\in M
	\end{equation}
	is impossible.
	
	Consider the following two sets:
	\begin{equation}\label{EQ5}
		\mathcal{H}(y)=\{(u,v)\in T_{f(y)}M\times R^l\;|\;u\in C_{f(y)}\setminus\{0\},v\in R_+^l\},
	\end{equation}
	\begin{equation}\label{EQ6}
		\mathcal{K}(y)=\{(u,v)\in T_{f(y)}M\times{R}^l\;|\;u=\exp_{f(y)}^{-1}f(x),v=g(x),\;\;\forall x\in M\},
	\end{equation}
	In the following discussion, we denote $\mathcal{H}(y)$ and $\mathcal{K}(y)$ simply as $\mathcal{H}$ and $\mathcal{K}$, respectively, when there is no risk of ambiguity. The set $\mathcal{K}(y)$ is referred to as the image of problem \eqref{question}, while the space $T_{f(y)}M\times{R}^l$ is identified as the image space (IS) of problem \eqref{question}. We can reformulate problem \eqref{question} by using the mapping $M_{y}\colon M\to T_{f(y)}M\times R^{l}$, where $M_y(x)=(\exp_{f(y)}^{-1}f(x),g(x))$. This leads to the following optimization problem:
	\begin{equation}
		\begin{aligned}&\mbox{max}_{C_p} u,\\&\mathrm \;{s.t.}(u,v)\in \mathcal{K}\cap(T_{f(y)}M\times R^l),\end{aligned}
	\end{equation}
	where $\max_{C_p}$ denotes the maximum value with respect to the cone $C_p$.
	
	The above two sets of equivalence relations portray the optimality of a feasible point. In general, even if the function involved in a GOP has some convexity, there are usually images that $\mathcal{K}$ is not convex. In order to overcome this difficulty, it is necessary to regularise the image $\mathcal{K}$ regularisation, i.e. cone expansion, is defined as follows:
	\begin{equation}\label{EQ2}
		\varepsilon(y)= \mathcal{K} -\mathrm{cl}\mathcal{H} \\
		= \left\{(u,v) \mid \exp_{f(y)}^{-1}f(x)-u \not<_{C_{f(y)}\setminus\{0\}} 0, \, v_i \leq g_i(x), \, i=1,\ldots,l, \;\;\forall x \in M \right\},
	\end{equation}
	where $\text{cl}$ denotes closure and $v_i$ is the ith-component of $v$. We call the set $\varepsilon(y)$ defined in \eqref{EQ2} the extended image of problem \eqref{question}. Indeed, the efficient solution can also be characterised by the extended image $\varepsilon(y)$.
	
	\begin{Theorem}\label{th4.1}
		Let $y$ be a point in $K$. The following statements are equivalent:
		\begin{itemize}
			\item[(1)]$y$ is the efficient solution of problem \eqref{question};
			\item[(2)]$\mathcal{H}\cap\varepsilon(y)=\emptyset $;
			\item[(3)]$\mathcal{H}\cap\mathcal{K}=\emptyset $.
		\end{itemize}
	\end{Theorem}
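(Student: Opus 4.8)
The plan is to establish the three equivalences by first showing $(1)\Leftrightarrow(3)$ as an essentially definitional restatement, and then handling $(2)\Leftrightarrow(3)$ through the observation that the extended image $\varepsilon(y)=\mathcal{K}-\mathrm{cl}\mathcal{H}$ is an enlargement of $\mathcal{K}$. This gives all three statements equivalent via $(1)\Leftrightarrow(3)$ together with $(2)\Leftrightarrow(3)$.

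First I would unwind the definitions to obtain $(1)\Leftrightarrow(3)$. By Definition \ref{de3.2} and the reformulation preceding \eqref{EQ5}, $y$ is efficient iff the system $\exp_{f(y)}^{-1}f(x)\in C_{f(y)}\setminus\{0\}$, $g(x)\in R_+^l$ has no solution $x\in M$. On the other hand, a pair $(u,v)$ lies in $\mathcal{H}\cap\mathcal{K}$ iff there is some $x\in M$ with $u=\exp_{f(y)}^{-1}f(x)\in C_{f(y)}\setminus\{0\}$ and $v=g(x)\in R_+^l$, which is exactly the solvability of that system. Hence $\mathcal{H}\cap\mathcal{K}=\emptyset$ if and only if the system is infeasible, i.e. if and only if $y$ is efficient, giving $(1)\Leftrightarrow(3)$.

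Next I would prove $(2)\Rightarrow(3)$ by a simple containment. Since $\mathrm{cl}\mathcal{H}$ contains the origin $(0,0)$ (because $0\in C_{f(y)}$ and $0\in R_+^l$), we have $\mathcal{K}=\mathcal{K}-\{(0,0)\}\subseteq\mathcal{K}-\mathrm{cl}\mathcal{H}=\varepsilon(y)$. Intersecting with $\mathcal{H}$ yields $\mathcal{H}\cap\mathcal{K}\subseteq\mathcal{H}\cap\varepsilon(y)$, so $\mathcal{H}\cap\varepsilon(y)=\emptyset$ immediately forces $\mathcal{H}\cap\mathcal{K}=\emptyset$.

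The remaining implication $(3)\Rightarrow(2)$ is the main obstacle, and I would argue it by contraposition. Suppose $(u,v)\in\mathcal{H}\cap\varepsilon(y)$; then $u\in C_{f(y)}\setminus\{0\}$, $v\in R_+^l$, and there exist $x\in M$, $c\in C_{f(y)}$, $r\in R_+^l$ with $\exp_{f(y)}^{-1}f(x)=u+c$ and $g(x)=v+r$. Convexity of the cone gives $u+c\in C_{f(y)}$ and $v+r\in R_+^l$. The delicate point is to guarantee $u+c\neq0$, so that the image point lands in $C_{f(y)}\setminus\{0\}$ rather than at the apex; here I would invoke pointedness of $C_{f(y)}$, which holds because $C_{f(y)}=P_{f(y),p}C_p$ is the image of $C_p$ under the linear isometry $P_{f(y),p}$, so $u+c=0$ would force $u\in C_{f(y)}\cap(-C_{f(y)})=\{0\}$, contradicting $u\neq0$. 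It then follows that $(\exp_{f(y)}^{-1}f(x),g(x))\in\mathcal{K}\cap\mathcal{H}$, so $\mathcal{H}\cap\mathcal{K}\neq\emptyset$. Combining this with the two previous paragraphs closes the equivalence of $(1)$, $(2)$, and $(3)$; I would flag that pointedness of $C_p$ is the hypothesis silently needed for the last step, and if it is not assumed one must add it to the standing assumptions on $C_p$.
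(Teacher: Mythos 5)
Your proposal is correct under the set-difference reading $\varepsilon(y)=\mathcal{K}-\mathrm{cl}\,\mathcal{H}$ (which is the standard ISA extended image, and the first equality in \eqref{EQ2}; note the paper's set-builder description there, with its universal quantifier over $x\in M$, does not actually match this and your implicit existential reading is the right correction), but it is organized differently from the paper's proof and is more rigorous at the one delicate point. The paper proves the cycle $(1)\Rightarrow(2)\Rightarrow(3)\Rightarrow(1)$: its $(2)\Rightarrow(3)$ is exactly your containment $\mathcal{K}\subseteq\varepsilon(y)$ (you justify it via $(0,0)\in\mathrm{cl}\,\mathcal{H}$, the paper by checking the defining conditions at $u=\exp_{f(y)}^{-1}f(x)$, $v=g(x)$), and its $(3)\Rightarrow(1)$ is in substance your definitional unwinding of $(1)\Leftrightarrow(3)$. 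Where you genuinely diverge is the implication $(1)\Rightarrow(2)$ (equivalently your contrapositive $(3)\Rightarrow(2)$): the paper argues it by combining the two non-membership relations \eqref{EQ7} and \eqref{EQ8} to conclude $\bar{u}\notin C_{f(y)}\setminus\{0\}$, and this inference --- from $w\notin C_{f(y)}\setminus\{0\}$ and $\bar{u}-w\notin C_{f(y)}\setminus\{0\}$ deduce $\bar{u}\notin C_{f(y)}\setminus\{0\}$ --- is not valid for a general closed convex cone: with $C_{f(y)}$ the first quadrant of the tangent plane, $w=(2,-1)$ and $\bar{u}=(1,1)$ satisfy both premises while $\bar{u}\in C_{f(y)}\setminus\{0\}$. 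Your route instead decomposes a point of $\mathcal{H}\cap\varepsilon(y)$ as $u=\exp_{f(y)}^{-1}f(x)-c$, $v=g(x)-r$ with $c\in C_{f(y)}$, $r\in R_+^l$, and pushes $(u+c,v+r)$ back into $\mathcal{H}\cap\mathcal{K}$, which is sound --- and your flag that pointedness of $C_p$ is silently needed is well taken: Section 3 assumes only that $C_p$ is closed and convex with nonempty interior, and if $C_{f(y)}$ is a half-plane one can arrange $u\in(C_{f(y)}\setminus\{0\})\cap(-C_{f(y)})$ with $\exp_{f(y)}^{-1}f(x)=u+c=0$, giving $\mathcal{H}\cap\varepsilon(y)\neq\emptyset$ while $\mathcal{H}\cap\mathcal{K}=\emptyset$, so without pointedness the equivalence $(2)\Leftrightarrow(3)$ actually fails. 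In short: two of your three implications coincide in substance with the paper's, but your handling of the only nontrivial implication both repairs a gap in the paper's argument and makes explicit a hypothesis the paper omits.
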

	
	\begin{proof}
		$(1){\Rightarrow}(2)$ For any$(\bar{u},\bar{v})\in\varepsilon(y)$, and $y$ is the efficient solution of problem \eqref{question}, we know that \eqref{EQ3} holds. It implies that
		\begin{equation}\label{EQ7}
			0\not<_{{C_{f(y)}\setminus\{0\}}}\exp_{f(y)}^{-1}f(x)\Rightarrow-\bar{u}\not<_{{C_{f(y)}\setminus\{0\}}}\exp_{f(y)}^{-1}f(x)-\bar{u}
		\end{equation}
		and
		\begin{equation}\label{EQ8}
			\exp_{f(y)}^{-1}f(x)-\bar{u}\not<_{C_{f(y)}\setminus\{0\}}0.
		\end{equation}
		
		It follows from \eqref{EQ7} and \eqref{EQ8} that
		$$0\not<_{C_{f(y)}\setminus\{0\}}\bar{u}.$$
		It means $\bar{u}\notin C_{f(y)}\setminus\{0\}$ and $(\bar{u},\bar{v})\notin\mathcal{H}$. 
		
		$(2){\Rightarrow}(3)$ Since $0\notin C_{f(y)}\setminus\{0\}$, for any $x\in M$, we have
		$$\exp_{f(y)}^{-1}f(x)-\exp_{f(y)}^{-1}f(x)=0\not<_{C_{f(y)}\setminus\{0\}}0.$$
		If $u=\exp_{f(y)}^{-1}f(x)$, for any $x\in M$, we have 
		$$\exp_{f(y)}^{-1}f(x)-u\not<_{C_{f(y)}\setminus\{0\}}0.$$
		Also, it is easy to see that for any $x\in M$, $i=1,...,l$,
		$$v_i=g_i(x)\leq g_i(x). $$
		Now combining $\mathcal{K}\subset\varepsilon(y)$ and (2), we know that $\mathcal{H}\cap\mathcal{K}=\emptyset $.
		
		$(3){\Rightarrow}(1)$ It is clear that $\mathcal{H}\cap\mathcal{K}=\emptyset $ implies  at least one of the following cases:
		\begin{itemize}
			\item[(i)] $\exp_{f(y)}^{-1}f(x)\notin C_{f(y)}\setminus\{0\}$ holds for any $x\in K$, or
			\item[(ii)]$g_i(x)<0$ for all $i=1,...,l$ and $x\in M$.
		\end{itemize}
		However, the feasible set $K\neq\emptyset $ and (ii) is false. This means that $y$ is the efficient solution of problem \eqref{question} by (i).
	\end{proof}
	
	How to introduce a suitable separation function is the key to the ISA method, which we will discuss later.

	\section{Separation Function and Applications}
\setcounter{equation}{0} 

 \subsection{Separation Functions}\noindent

So far, it is clear that $y$ is an efficient solution to problem \eqref{question} iff  $\mathcal{H}\cap\mathcal{K}=\emptyset $. However, it is not easy to prove it directly. One way to prove it indirectly is to obtain the existence of a separating function such that the two disjoint level sets of this separating function contain $\mathcal{H}$ and $\mathcal{K}$. Weakly separation functions, in particular regular weak separation functions, are widely used to study gap functions, optimality conditions and pairwise outcomes. We draw on previous studies to give two specific classes of separation functions based on the GOP in this paper.
	
	\begin{Definition}\label{de5.1}
		The function $\omega_i:T_{f(y)}M\times R^l\times\Pi_i\to R$ is said to be weak separation function with respect to $\mathcal{H}$ for $i=1,2,$ iff it satisfies
		$$
		\begin{aligned}
			& \mathrm{lev}_{\geq 0}\omega_i(\cdot;\pi) \supseteq \mathcal{H}, \;\;\forall \pi \in \Pi_i; \\
			& \bigcap_{\pi \in \Pi_i} \mathrm{lev}_{> 0}\omega_i(\cdot;\pi) \subseteq \mathcal{H}, &
		\end{aligned}
		$$
		where $\Pi_{i}$ is a set of parameters to be specified case by case. 
	\end{Definition}
	
	\begin{Definition}\label{de5.2}
		The function $\omega_i:T_{f(y)}M\times R^l\times\Pi_i\to R$ is said to be regular weak separation functions with respect to $\mathcal{H}$ for $i=1,2,$ iff it satisfies
		$$\bigcap_{\pi\in\Pi_i}\mathrm{lev}_{>0}\omega_i(\cdot;\pi)=\mathcal{H}.$$
	\end{Definition}
	
	Moreover, we consider a special class of weak separation functions $\omega_1:T_{f(y)}M\times R^l\times\Pi_i\to R$. If $\Pi_1=( \theta,\lambda)$, then
	\begin{equation}\label{EQ9}
		\omega_1(u,v;\theta,\lambda)=\langle\theta,u\rangle+\langle\lambda,v\rangle,
	\end{equation}
	\begin{equation}\label{EQ10}
		\theta\in C_{f(y)}^*=\{\theta\in T_{f(y)}M|\langle\theta,b\rangle\geq0,\;\;\forall b\in C_{f(y)}\subset T_{f(y)}M\},
	\end{equation}
	$$\lambda\in V^*=\{l\in R^l|\langle l,r\rangle\geq0,\;\;\forall r\in R_+^l\}, $$
	where $\theta $ and $\lambda$ are two parameters. We have to find suitable parameters to construct the separation function.
	
	\begin{Theorem}\label{th5.1}
		If $W(\theta,\lambda)$ is the level set of $\omega_1(u,v;\theta,\lambda),$ i.e.
		 $$W(\theta,\lambda)=\mathrm{lev}_{>0}\omega_1(u,v;\theta,\lambda)=\{(u,v)|\omega_1(u,v;\theta,\lambda)>0\},$$ then $\mathcal{H}\subset W(\theta,\lambda)$ and
		\begin{equation}\label{EQ12}
			\mathcal{H}=\bigcap_{{\theta\in C_{f(y)}^{*}\setminus\{0\}}}W(\theta,\lambda).
		\end{equation}
	\end{Theorem}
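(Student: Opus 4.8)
The plan is to prove the two claims separately: first the inclusion $\mathcal{H}\subset W(\theta,\lambda)$ for each admissible pair $(\theta,\lambda)$, and then the equality \eqref{EQ12} as an intersection over $\theta\in C_{f(y)}^*\setminus\{0\}$. For the inclusion, I would take an arbitrary $(u,v)\in\mathcal{H}$, so that by \eqref{EQ5} we have $u\in C_{f(y)}\setminus\{0\}$ and $v\in R_+^l$. Since $\theta\in C_{f(y)}^*$ we get $\langle\theta,u\rangle\geq 0$, and since $\lambda\in V^*$ we get $\langle\lambda,v\rangle\geq 0$, hence $\omega_1(u,v;\theta,\lambda)=\langle\theta,u\rangle+\langle\lambda,v\rangle\geq 0$. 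To land strictly inside $W(\theta,\lambda)=\mathrm{lev}_{>0}$, I would argue that for $\theta\neq 0$ in the polar cone and $u\neq 0$ in the cone, the defining inequality is strict; this is where I must be careful, because $\langle\theta,u\rangle\geq 0$ only gives a nonstrict bound in general. The cleanest route is to note that $\mathcal{H}\subseteq W(\theta,\lambda)$ should be read as the definitional property $\mathrm{lev}_{\geq 0}\omega_1(\cdot;\theta,\lambda)\supseteq\mathcal{H}$ from Definition \ref{de5.1}, and to transfer the strictness to the intersection step.

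For the equality, the inclusion $\mathcal{H}\subseteq\bigcap_{\theta\in C_{f(y)}^*\setminus\{0\}}W(\theta,\lambda)$ follows from the first part applied to every admissible $\theta$. The substantive direction is the reverse inclusion: I would take $(u,v)\notin\mathcal{H}$ and produce a separating parameter $\theta\in C_{f(y)}^*\setminus\{0\}$ (together with a compatible $\lambda\in V^*$) for which $\omega_1(u,v;\theta,\lambda)\leq 0$, thereby showing $(u,v)\notin\bigcap_\theta W(\theta,\lambda)$. The negation of membership in $\mathcal{H}$ splits into the case $u\notin C_{f(y)}\setminus\{0\}$ and the case $v\notin R_+^l$. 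In the first case, since $C_{f(y)}$ is a closed convex cone (it is the parallel transport of the closed convex cone $C_p$, and parallel transport is a linear isometry so it preserves closedness and convexity), the bipolar theorem gives $C_{f(y)}=(C_{f(y)}^*)^*$, so $u\notin C_{f(y)}$ yields some $\theta\in C_{f(y)}^*$ with $\langle\theta,u\rangle<0$; I would then choose $\lambda$ so that $\langle\lambda,v\rangle$ does not overturn this sign, forcing $\omega_1\leq 0$. In the second case $v$ has a negative component, and a suitable $\lambda\in V^*=R_+^l$ makes $\langle\lambda,v\rangle<0$ while taking $\theta$ at the boundary kills the $u$-term.

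The main obstacle I anticipate is the boundary case $u=0$, which is excluded from $C_{f(y)}\setminus\{0\}$ but satisfies $\langle\theta,u\rangle=0$ for every $\theta$: here $(0,v)\notin\mathcal{H}$ must be separated purely through the $v$-coordinate, so the argument genuinely needs $\lambda$ to vary, which is why the intersection in \eqref{EQ12} is written with $\theta$ ranging over $C_{f(y)}^*\setminus\{0\}$ while $\lambda$ is implicitly free in $V^*$. I would handle this by treating the $\lambda$-parameter as part of the family over which the intersection is (tacitly) taken, or by exhibiting for each excluded $(u,v)$ a single pair $(\theta,\lambda)$ with $\omega_1(u,v;\theta,\lambda)\leq 0$. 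A secondary subtlety is ensuring the separating functional lives in the tangent space $T_{f(y)}M$ rather than the ambient $R^3$; this is guaranteed because $\theta$ is sought in $C_{f(y)}^*\subset T_{f(y)}M$ and the inner product in \eqref{EQ10} is the induced one on $T_{f(y)}M$, so the finite-dimensional separation is carried out entirely within that subspace. Once these cases are reconciled, the two inclusions combine to give the asserted equality.
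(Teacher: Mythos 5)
Your overall architecture tracks the paper's proof quite closely: both prove the inclusion $\mathcal{H}\subset W(\theta,\lambda)$ first, then establish \eqref{EQ12} by showing every $(\hat u,\hat v)\notin\mathcal{H}$ is excluded from some $W(\theta,\lambda)$, splitting into the cases $\hat u\notin C_{f(y)}\setminus\{0\}$ and $\hat v\notin R_+^l$, choosing $\lambda=0$ in the first case and treating $\hat u=0$ exactly as the paper does (then $\omega_1=0\not>0$ for every $\theta$). Where you diverge, you actually improve on the paper: you invoke bipolarity $C_{f(y)}=(C_{f(y)}^*)^*$ directly (noting parallel transport preserves closedness and convexity) to produce $\theta\in C_{f(y)}^*\setminus\{0\}$ with $\langle\theta,\hat u\rangle<0$ when $\hat u\notin C_{f(y)}$, whereas the paper takes a convoluted detour through \eqref{EQ14}: it supposes that inequality insolvable, deduces $\langle\theta,u\rangle\geq0$ on all of $T_{f(y)}M$, and contradicts this with $u=-\theta$. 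Your remark that $\lambda$ must tacitly vary over $V^*$ in the intersection \eqref{EQ12} is also correct and matches the paper's actual usage (it picks $\hat\lambda=0$ or $\check\lambda$ depending on the point).

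There are, however, two genuine gaps. First, in the case $\hat v\notin R_+^l$ your mechanism---``taking $\theta$ at the boundary kills the $u$-term''---fails: if $\hat u\in\text{int}\,C_{f(y)}$, then $\langle\theta,\hat u\rangle>0$ for \emph{every} $\theta\in C_{f(y)}^*\setminus\{0\}$, boundary or not, and $\theta=0$ is inadmissible, so no choice of $\theta$ annihilates that term. The missing idea is positive homogeneity in the parameters: fix $\check\lambda=\check\lambda_{i_0}e_{i_0}$ with $\check\lambda_{i_0}\hat v_{i_0}<0$ and shrink the cone parameter to $\alpha\check\theta$ with $0<\alpha<-\check\lambda_{i_0}\hat v_{i_0}/\langle\check\theta,\hat u\rangle$ when $\langle\check\theta,\hat u\rangle>0$ (any $\alpha>0$ otherwise), so the negative $v$-term dominates; this scaling step is exactly how the paper closes this case (equivalently one may scale $\lambda$ up). Second, your retreat from $\mathrm{lev}_{>0}$ to the definitional $\mathrm{lev}_{\geq0}$ reading leaves the forward inclusion $\mathcal{H}\subseteq\bigcap_{\theta}W(\theta,\lambda)$ unproved, and ``transferring the strictness to the intersection step'' cannot recover it: equality \eqref{EQ12} requires $\omega_1>0$ on $\mathcal{H}$ for every admissible $\theta$. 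The paper handles this by flatly asserting $\langle\theta,u\rangle>0$ for all $u\in C_{f(y)}\setminus\{0\}$ and $\theta\in C_{f(y)}^*\setminus\{0\}$---and your instinct that this is only nonstrict in general is sound: for $C_{f(y)}=R_+^2$, $\theta=(1,0)$, $u=(0,1)$, $v=0$ one gets $\omega_1=0$, so this boundary point of $\mathcal{H}$ escapes $W(\theta,0)$; strict positivity is guaranteed only for $\theta\in\text{int}\,C_{f(y)}^*$. So on this second point your caution exposes a real defect in the theorem's statement and in the paper's own first step rather than an oversight peculiar to you; but taken as a proof of the theorem as written, your proposal leaves that inclusion open, and in the $v$-case it relies on a step that fails.
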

	
	\begin{proof}
		First, we prove $\mathcal{H}\subset W(\theta,\lambda)$. It is obviously that $(u,v)\in\mathcal{H}$ is equivalent to $u\in C_{f(y)}\setminus\{0\},$ $v\in R_+^l$. For any $\theta\in C_{f(y)}^*\setminus\{0\}, \lambda\in V^{*}$ and $(u,v)\in\mathcal{H},$ we have $\langle\theta,u\rangle>0$, $\langle\lambda,v\rangle\geq0,$  and 		$$\omega_1(u,v;\theta,\lambda)=\langle\theta,u\rangle+\langle\lambda,v\rangle>0.$$
		It follows from the definition of $W(\theta,\lambda)$ that $\mathcal{H}\subset W(\theta,\lambda)$ is true.
		
		Next we show that \eqref{EQ12} holds. It is only necessary to prove for any $(\hat{u},\hat{v})\notin\mathcal{H}$, there exists $\hat{\theta}\in C_{f(y)}^*\setminus\{0\}$ and $\hat{\lambda}\in V^*$ such that
		\begin{equation}\label{EQ13}
			(\hat{u},\hat{v})\notin W(\hat{\theta},\hat{\lambda}).
		\end{equation}
		
		Thus, we have at least one of the following two cases holds:
		\begin{itemize}
			\item[(i)] $\hat{u}\notin C_{f(y)}\setminus\{0\}$ or 
			\item[(ii)] $\hat{v}\notin R_+^l$.
		\end{itemize}
		
		If (i) holds, let $\hat{\lambda}=0\in V^{*}$. If $\hat{u}=0$, for any $\hat{\theta}\in C_{f(y)}^*\setminus\{0\}$, we have $$\omega_1(\hat{u},\hat{v};\hat{\lambda},\hat{\theta})=\langle\hat{\theta},0\rangle+\langle0,\hat{v}\rangle=0\not>0.$$
		Then  $(\hat{u},\hat{v})\notin W(\hat{\theta},\hat{\lambda})$.
		
		If  $\hat{u}\neq0$, then  $\hat{u}\notin C_{f(y)}$ and hence exists  $\hat{\theta}\in C_{f(y)}^*\setminus\{0\}$ such that
		\begin{equation}\label{EQ14}
			\langle\hat{\theta},u\rangle\leq0,\;\;\forall u\notin C_{f(y)}.
		\end{equation}
		
		If there exists $\hat{\theta}\in C_{f(y)}^*\setminus\{0\}$ such that
		$$\omega_1\big(\hat{u},\hat{v};\hat{\lambda},\hat{\theta}\big)=\langle\hat{\theta},\hat{u}\rangle+\langle0,\hat{v}\rangle=\langle\hat{\theta},\hat{u}\rangle\leq0,$$
		$$(\hat{u},\hat{v})\notin W(\hat{\theta},\hat{\lambda}),$$
		and it leads to a contradiction. Otherwise, if \eqref{EQ14} has no solution, then one has
		\begin{equation}\label{EQ15}
			\langle\theta,u\rangle>0,\;\;\forall u\notin C_{f(y)},\;\;\forall\theta\in C_{f(y)}^{*}\setminus\{0\}.
		\end{equation}
	
		Since $\theta\in C_{f(y)}^*\setminus\{0\}\subset C_{f(y)}^*$ and \eqref{EQ10}, we have
		\begin{equation}\label{EQ16}
			\langle\theta,u\rangle\geq0,\;\;\forall u\in C_{f(y)},\;\;\forall\theta\in C_{f(y)}^*\setminus\{0\}.
		\end{equation}
		Combining \eqref{EQ15} and \eqref{EQ16}, we have
		\begin{equation}\label{EQ17}
			\langle\theta,u\rangle\geq0,\;\;\forall u\in T_{f(y)}M,\;\;\forall\theta\in C_{f(y)}^*\setminus\{0\}.
		\end{equation}
		
		However, setting $u=-\theta\neq0$, then we have  $\langle\theta,u\rangle=\langle\theta,-\theta\rangle=-\|\theta\|^2<0$, which is a contradicts to \eqref{EQ17}. 
		
		If (ii) holds, there exists at least one component $\hat{v}_{i_0}$ of $\hat{v}$ such that $\hat{v}_{i_0}<0$.
		Take $\check{\lambda}_{i_0}>0$ such that $\check{\lambda}=(0,\ldots,0,\check{\lambda}_{i_0},0,\ldots,0)$.

There is no doubt that $\check{\lambda}$ belongs to the set $V^{*},\;\;\langle\check{\lambda},\hat{v}\rangle=\check{\lambda}_{i_0}\hat{v}_{i_0}<0$. Let $\check{\theta}\in C_{f(y)}^{*}\setminus\{0\}$ be a given parameter and for any $\alpha>0$ be any given positive number. Then $\alpha\check{\theta}$ also belongs to $C_{f(y)}^*\setminus\{0\}$. Setting $$0<\alpha<\frac{-\check{\lambda}_{i_0}\hat{v}_{i_0}}{\langle\check{\theta},\hat{u}\rangle},$$ 
		one has $$\omega_{1}(\hat{u},\hat{v};\alpha\check{\theta},\check{\lambda})=\alpha\langle\check{\theta},\hat{u}\rangle+\check{\lambda}_{{i_{0}}}\hat{v}_{{i_{0}}}<0.$$ 
		This shows that
		$(\hat{u},\hat{v})\notin W(\alpha\check{\theta},\check{\lambda})$, which is a contradiction.
	\end{proof}
	
	Next, we introduce another special class of nonlinear weak separation functions. This weak separating function can be expressed as the sum of two partial functions. The set of parameters is given by $$\Pi_2=\mathrm{T}\times\Gamma=C_{f(y)}^{*}\times(-\text{int}R_{+}^{l}\cup\{0_{l}\})$$
	 with $\pi=(\varphi,\gamma)\in\Pi_2.$ For any $\varphi\in \mathrm{T}$ and $\gamma\in\Gamma,$ the mapping $\omega_{2}:T_{f(y)}M\times R^l\times\Pi_2\to R$ is defined by:
	\begin{equation}\label{EQ18}
		\omega_2(u,v;\pi)=\omega_2(u,v;\varphi,\gamma)=\widetilde{\omega}(u;\varphi)+\underline{\omega}(v;\gamma).
	\end{equation}
	
	In particular, $\widetilde{\omega}(u;\varphi)\colon T_{f(y)}M\times\mathrm{T}\to\mathrm{R},\underline{\omega}(\nu;\gamma) : R^l\times\Gamma\to R$ is defined as:
	\begin{equation}\label{EQ19}
		\widetilde{\omega}(u;\varphi)=-\Delta_{R_+}(\langle\varphi,u\rangle),
	\end{equation}
	\begin{equation}\label{EQ20}
		\underline{\omega}(v;\gamma)=\begin{cases}-\xi_{R_+^l,\gamma}(v) ,& \text{if } \gamma \in -\text{int}R_+^l, \\0, & \text{if } \gamma = 0_l.\end{cases}
	\end{equation}
   For any $v\in {R}^l, \gamma\in\Gamma$ and $\alpha\geq0,$ there exists $\gamma_\alpha\in\Gamma$ such that 
	\begin{equation}\label{EQ21}
		\alpha\underline{\omega}(v;\gamma)=\underline{\omega}(v;\gamma_\alpha),
	\end{equation}
	and it implies that there exists $\tilde{\gamma}\in\Gamma$ such that
	\begin{equation}\label{EQ22}
		\underline{\omega}(\cdot;\tilde{\gamma})\equiv0.
	\end{equation}
	
	\begin{Theorem}\label{th5.2}
		The family of functions $\omega_{2}$ defined by \eqref{EQ19}, \eqref{EQ20} ,\eqref{EQ21} and \eqref{EQ22} is a class of 
		weak separation functions with respect to $\mathcal{H}$, where the parameter set $\Pi_{2}=\mathrm{T}\times\Gamma=C_{f(y)}^{*}\times-\text{int}R_{+}^{l}\cup\{0_{l}\}$.
	\end{Theorem}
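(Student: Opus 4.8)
The plan is to verify the two defining conditions of a weak separation function in Definition \ref{de5.1} directly, after first reducing the two ingredient functions to transparent sign conditions. I would begin by computing $\Delta_{R_+}$ on the real line: since $R_+\subset R$ is a closed convex cone, Lemma \ref{pr2.4} gives $\Delta_{R_+}(t)=-t$ for every $t\in R$, so that \eqref{EQ19} simplifies to $\widetilde{\omega}(u;\varphi)=\langle\varphi,u\rangle$. Next, applying Lemma \ref{pr2.5} to the Gerstewitz function $\xi_{R_+^l,\gamma}$ with $\gamma\in-\text{int}R_+^l$, the characterizations $\xi_{R_+^l,\gamma}(v)\leq 0\Leftrightarrow v\in R_+^l$ and $\xi_{R_+^l,\gamma}(v)<0\Leftrightarrow v\in\text{int}R_+^l$ give, through \eqref{EQ20},
$$\underline{\omega}(v;\gamma)\geq 0\Leftrightarrow v\in R_+^l,\qquad \underline{\omega}(v;\gamma)>0\Leftrightarrow v\in\text{int}R_+^l,$$
and in particular $\underline{\omega}(v;\gamma)<0$ whenever $v\notin R_+^l$. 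With $\gamma=0_l$ one has $\underline{\omega}(\cdot;0_l)\equiv 0$, which is exactly \eqref{EQ22}.

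For the first condition, $\mathrm{lev}_{\geq 0}\omega_2(\cdot;\pi)\supseteq\mathcal{H}$ for every $\pi=(\varphi,\gamma)\in\Pi_2$, I take $(u,v)\in\mathcal{H}$, so $u\in C_{f(y)}$ and $v\in R_+^l$. Then $\widetilde{\omega}(u;\varphi)=\langle\varphi,u\rangle\geq 0$ because $\varphi\in\mathrm{T}=C_{f(y)}^*$ and $u\in C_{f(y)}$ by Definition \ref{de2.3}, while $\underline{\omega}(v;\gamma)\geq 0$ by the sign fact above (both for $\gamma\in-\text{int}R_+^l$ and for $\gamma=0_l$). Summing the two contributions gives $\omega_2(u,v;\varphi,\gamma)\geq 0$, which is the desired inclusion.

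The substantive part is the second condition, $\bigcap_{\pi\in\Pi_2}\mathrm{lev}_{>0}\omega_2(\cdot;\pi)\subseteq\mathcal{H}$, which I prove by contraposition: for each $(u,v)\notin\mathcal{H}$ I exhibit one parameter $\pi$ with $\omega_2(u,v;\pi)\leq 0$. Failure of membership in $\mathcal{H}$ means $u\notin C_{f(y)}\setminus\{0\}$ or $v\notin R_+^l$. In the first case I set $\gamma=0_l$, so $\omega_2=\langle\varphi,u\rangle$; if $u=0$ this is $0$ for every $\varphi$, and if $u\neq 0$ then $u\notin C_{f(y)}$, so by the bipolar theorem for the closed convex cone $C_{f(y)}$ (equivalently, the contradiction argument already used in the proof of Theorem \ref{th5.1}) there is $\varphi\in C_{f(y)}^*$ with $\langle\varphi,u\rangle<0$, whence $\omega_2\leq 0$. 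The remaining case $v\notin R_+^l$ is the main obstacle, because now $\widetilde{\omega}(u;\varphi)=\langle\varphi,u\rangle$ may be strictly positive when $u\in C_{f(y)}$, so no fixed $\gamma$ obviously drives the sum below zero. Here I exploit the positive homogeneity recorded in \eqref{EQ21}: fixing any $\varphi\in C_{f(y)}^*$ and any $\gamma_0\in-\text{int}R_+^l$, the number $c:=\underline{\omega}(v;\gamma_0)$ is strictly negative, and \eqref{EQ21} yields parameters $\gamma_\alpha\in\Gamma$ with $\underline{\omega}(v;\gamma_\alpha)=\alpha c$ for every $\alpha>0$. Choosing $\alpha$ large enough that $\langle\varphi,u\rangle+\alpha c\leq 0$ produces $\omega_2(u,v;\varphi,\gamma_\alpha)\leq 0$, which finishes the contraposition and hence the second inclusion. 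The only points needing care are that the scaled parameters $\gamma_\alpha$ stay in $-\text{int}R_+^l$ (immediate from positivity of the scale factor) and that $c$ is bounded away from $0$, both of which follow from the strict inequality $\xi_{R_+^l,\gamma_0}(v)>0$ for $v\notin R_+^l$.
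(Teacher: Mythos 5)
Your proof is correct, and while its overall skeleton (verify the inclusion $\mathrm{lev}_{\geq 0}\omega_2(\cdot;\pi)\supseteq\mathcal{H}$ directly, then prove $\bigcap_{\pi\in\Pi_2}\mathrm{lev}_{>0}\omega_2(\cdot;\pi)\subseteq\mathcal{H}$ by exhibiting, for each $(u,v)\notin\mathcal{H}$, one parameter with $\omega_2(u,v;\pi)\leq 0$) coincides with the paper's, your treatment of the case $v\notin R_+^l$ is genuinely different. The paper disposes of that case in one line by choosing $\varphi=0_l$, which is admissible because $\mathrm{T}=C_{f(y)}^*$ (unlike $\Pi_1$, where $\theta\in C_{f(y)}^*\setminus\{0\}$) and which annihilates the term $\widetilde{\omega}(u;\varphi)$ outright, leaving $\underline{\omega}(v;\gamma)=-\xi_{R_+^l,\gamma}(v)<0$; you apparently did not notice that $0\in\mathrm{T}$ is allowed, and instead invoked the homogeneity property \eqref{EQ21} with $\gamma_\alpha=\gamma/\alpha$ to scale $\underline{\omega}(v;\cdot)$ until it dominates $\langle\varphi,u\rangle$. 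Both routes are valid; the paper's is shorter, but yours buys two things: it shows the conclusion survives even if the parameter set for $\varphi$ excluded $0$, and it is the only place where \eqref{EQ21} actually earns its keep in this theorem (the paper lists \eqref{EQ21} in the hypotheses but never uses it in its own proof, deferring its real use to Theorem \ref{th6.6}). Two further points in your favor: your explicit reduction $\Delta_{R_+}(t)=-t$, hence $\widetilde{\omega}(u;\varphi)=\langle\varphi,u\rangle$, streamlines the first inclusion and replaces the paper's somewhat odd limiting argument ($-\Delta_{R_+}(\langle t\varphi,u\rangle)\to-\infty$ as $t\to+\infty$, which is unnecessary since a single $\varphi$ with $\langle\varphi,u\rangle\leq 0$ already suffices); and your disjunctive case split ($u\notin C_{f(y)}\setminus\{0\}$, handled for arbitrary $v$, or $v\notin R_+^l$, handled for arbitrary $u$) is exhaustive, whereas the paper's two cases as literally stated ($u\notin C_{f(y)}\setminus\{0\}$ with $v\in R_+^l$, or $u\in C_{f(y)}\setminus\{0\}$ with $v\notin R_+^l$) omit the subcase where both memberships fail, a small gap your version closes automatically.
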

	
	\begin{proof}
		For any $\varphi\in{T},u\in C_{f(y)}\setminus\{0\}$, there is $\langle\varphi,u\rangle\geq0$. By Lemma \ref{pr2.4} (iii) and (iv) it follows that
		\begin{equation}\label{EQ23}
			\widetilde{\omega}(u;\varphi)\geq0.
		\end{equation}
		By Lemma \ref{pr2.5} (ii), for any $\gamma\in-\text{int}R_+^l,v\in{R}_+^l$, we have $-\xi_{{R}_+^l,\gamma}(\nu)\geq0$. Set $\gamma=0_{l},$ and therefore  
		\begin{equation}\label{EQ24}
			\underline{\omega}(v;\gamma)\geq0.
		\end{equation}
		Combining with \eqref{EQ23} and \eqref{EQ24}, we have 
		\begin{equation}
			\omega_2(u,v;\varphi,\gamma)=\widetilde{\omega}(u;\varphi)+\underline{\omega}(v;\gamma)\geq0.
		\end{equation}
		Then for any $(\varphi,\gamma)\in\Pi_2,$ one has
		\begin{equation}
		\mathrm{lev}_{\geq0}\omega_2(u,v;\varphi,\gamma)\supseteq\mathcal{H}.
		\end{equation}
		It is only need to prove
		\begin{equation}\label{EQ25}
			\bigcap_{\pi\in\Pi_2}\mathrm{lev}_{>0}\omega_2(u,v;\varphi,\gamma)\subseteq\mathcal{H}.
		\end{equation}
It is necessary to demonstrate that there exists $(\varphi,\gamma)\in\Pi_2$ such that $\omega_2(u,v;\varphi,\gamma)\leq0$ when $(u,v)\notin\mathcal{H}$.
		
		For $(u,v)\notin\mathcal{H}$, we discuss it in the following two cases:
			\begin{itemize}
			\item[(i)] $u\notin C_{f(y)}\setminus\{0\}$, and $v\in{R}_{+}^{l}$ or 
			\item[(ii)]  $u\in C_{f(y)}\setminus\{0\}$, and $v\notin{R}_+^l$.
		\end{itemize}
		
			If (i) holds, there exists $\varphi\in C_{f(y)}^*$, such that $\langle\varphi,u\rangle\leq0$, and $t\varphi\in C_{f(y)}^*$ when $t>0$. By Lemma \ref{pr2.4}, it follows that $\tilde{\omega}(u;\varphi) = -\Delta_{R_+}(\langle t\varphi, u\rangle)$ converges to negative infinity as $t$ approaches positive infinity. Let $\gamma=0_{l}$, we have  $\underline{\omega}(v;\gamma)=0$, so there exists $(\varphi,\gamma)\in\Pi_2$ such that $\omega_2(u,v;\varphi,\gamma)\leq0$.
		
			If (ii) holds, by Lemma \ref{pr2.5}(ii), if $\xi_{R_{+}^{l},\gamma}(v)<0$, then $v\in{R}_+^l$, which contradicts the assumption $\xi_{R_{+}^{l},\gamma}(v)\geq0,\underline{\omega}(v;\gamma)\leq0$. Let $\varphi=0$, we have
		$\tilde{\omega}(u;\varphi)=0$,    $\omega_2(u,v;\varphi,\gamma)\leq0$.
	\end{proof}
	
			
	\subsection{Characterizations for optimality conditions}\noindent
	
	In section part, we discuss a class of regular weak separation functions \eqref{EQ9}, and give a sufficient optimality condition for problem \eqref{question}.
	
	\begin{Theorem}\label{th6.1}
		Let $y\in K$, for any $x\in M,$ there exists $\theta_0\in C_{f(y)}^*\setminus\{0\}$ and $\lambda_0\in V^*$ such that 
		\begin{equation}\label{6.1}
			\langle\theta_0,\exp_{f(y)}^{-1}f(x)\rangle+\langle\lambda_0,g(x)\rangle\leq0,
		\end{equation}
		then $y$ is an efficient solution to the GOP \eqref{question}.
	\end{Theorem}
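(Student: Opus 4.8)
The plan is to argue by contradiction, using the separation characterization of efficiency from Theorem \ref{th4.1} together with the containment $\mathcal{H}\subset W(\theta,\lambda)$ established in Theorem \ref{th5.1}. First I would recall that, by the equivalence $(1)\Leftrightarrow(3)$ of Theorem \ref{th4.1}, it suffices to prove $\mathcal{H}\cap\mathcal{K}=\emptyset$, since this is exactly the statement that $y$ is an efficient solution of \eqref{question}. So I would reduce the optimality claim to a disjointness claim in the image space.

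Suppose, for contradiction, that $\mathcal{H}\cap\mathcal{K}\neq\emptyset$. By the definitions \eqref{EQ5} and \eqref{EQ6} of $\mathcal{H}$ and $\mathcal{K}$, this produces a witness $\bar{x}\in M$ with $\exp_{f(y)}^{-1}f(\bar{x})\in C_{f(y)}\setminus\{0\}$ and $g(\bar{x})\in R_+^l$, that is, $(\exp_{f(y)}^{-1}f(\bar{x}),g(\bar{x}))\in\mathcal{H}$. Next I would invoke the hypothesis \eqref{6.1} precisely at this $\bar{x}$: there exist $\theta_0\in C_{f(y)}^*\setminus\{0\}$ and $\lambda_0\in V^*$ with
$$\langle\theta_0,\exp_{f(y)}^{-1}f(\bar{x})\rangle+\langle\lambda_0,g(\bar{x})\rangle\leq0,$$
so that $\omega_1(\exp_{f(y)}^{-1}f(\bar{x}),g(\bar{x});\theta_0,\lambda_0)\leq0$ and hence the pair lies outside $W(\theta_0,\lambda_0)=\mathrm{lev}_{>0}\omega_1(\cdot;\theta_0,\lambda_0)$. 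On the other hand, Theorem \ref{th5.1} guarantees $\mathcal{H}\subset W(\theta,\lambda)$ for every $\theta\in C_{f(y)}^*\setminus\{0\}$ and $\lambda\in V^*$, in particular for the pair $(\theta_0,\lambda_0)$; combined with $(\exp_{f(y)}^{-1}f(\bar{x}),g(\bar{x}))\in\mathcal{H}$ this forces $\omega_1(\exp_{f(y)}^{-1}f(\bar{x}),g(\bar{x});\theta_0,\lambda_0)>0$. These two conclusions are contradictory, so $\mathcal{H}\cap\mathcal{K}=\emptyset$, and Theorem \ref{th4.1} then yields that $y$ is an efficient solution of \eqref{question}.

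I do not anticipate a serious obstacle, since the argument is a direct application of the regular weak separation machinery developed in Theorem \ref{th5.1}. The only point demanding care is the quantifier bookkeeping: the hypothesis supplies multipliers $\theta_0,\lambda_0$ for each $x\in M$ individually, so one must apply it at the specific witness $\bar{x}$ extracted from the assumed nonempty intersection rather than hunting for a single pair valid simultaneously for all $x$. Provided the containment $\mathcal{H}\subset W(\theta,\lambda)$ of Theorem \ref{th5.1} holds as stated, the contradiction closes at once.
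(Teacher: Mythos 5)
Your proposal is correct and takes essentially the same route as the paper: both reduce efficiency to the disjointness $\mathcal{H}\cap\mathcal{K}=\emptyset$ via Theorem \ref{th4.1} and then exploit the containment $\mathcal{H}\subset W(\theta,\lambda)$ from Theorem \ref{th5.1}, the paper stating this directly as the level-set inclusion $\mathcal{K}\subset\mathrm{lev}_{\leq0}\omega_1(u,v;\theta_0,\lambda_0)$ while you phrase the same separation as a contradiction at a witness point of the intersection. Your quantifier bookkeeping (allowing $\theta_0,\lambda_0$ to depend on $x$) is if anything slightly more careful than the paper, which tacitly reads \eqref{6.1} with a single pair uniform in $x$ when it declares \eqref{6.4} equivalent to \eqref{6.1}; both arguments stand or fall with Theorem \ref{th5.1} as stated.
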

	
	\begin{proof}
		From Theorem \ref{th4.1}, we know that $y\in K$  is an efficient solution of problem \eqref{question} iff
		\begin{equation}\label{6.2}
			\mathcal{H}\cap\mathcal{K}=\emptyset.
		\end{equation}
		By Theorem \ref{th5.1}, we have  
		\begin{equation}\label{6.3}
			\mathcal{H}=\bigcap_{\theta\in C_{f(y)}^{*}\setminus\{0\}}\mathrm{lev}_{>0}\omega_{1}(u,v;\theta,\lambda).
		\end{equation}
		Therefore, we only need to prove that there exist $\theta_0\in C_{f(y)}^*\setminus\{0\}$ and $\lambda_0\in V^*$ such that
		\begin{equation}\label{6.4}
			\mathcal{K}\subset\mathrm{lev}_{\leq0}\omega_1(u,v;\theta_0,\lambda_0).
		\end{equation}
		It is easy to see that \eqref{6.4} is equivalent to \eqref{6.1}. This completes the proof.
	\end{proof}
	
	Next, we introduce another class of weak separation functions, which is denoted by $\omega_2(u,v;\varphi,\gamma)$, and give a sufficient optimality condition for \eqref{question}.
	
	\begin{Theorem}\label{th6.2}
		For any $x\in M$, $y\in K$ is an efficient solution to problem \eqref{question} if there exists $\varphi\in C_{f(y)}^*$ and $\gamma\in-\text{int}{R}_+^l\cup\{0_l\}$, such that
		
		\begin{equation*}
			0\geq\omega_2(M_y(x);\varphi,\gamma)= \left\{
			\begin{aligned}
				-\Delta_{{R}_+}\big(\big\langle\varphi,\exp_{f(y)}^{-1}f(x)\big\rangle\big)-\xi_{{R}_+^l,\gamma}\big(g(x)\big),\, & \gamma\in-\text{int}{R}_+^l,\\
				-\Delta_{{R}_+}\big(\big\langle\varphi,\exp_{f(y)}^{-1}f(x)\big\rangle\big),\, & \gamma=0_l.\\
			\end{aligned}
			\right.
		\end{equation*}		
		In other words, if there exists $(\varphi,\gamma)\in{T}\times\Gamma $ such that
		\begin{equation}\label{6.5}
			\omega_2(M_y(x);\varphi,\gamma)=\widetilde{\omega}(\exp_{f(y)}^{-1}f(x);\varphi)+\underline{\omega}(g(x);\gamma)\leq0,
		\end{equation}
		then $y$ is an efficient solution to problem \eqref{question}.
	\end{Theorem}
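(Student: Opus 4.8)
The plan is to follow the same three-move template used in the proof of Theorem \ref{th6.1}, replacing the linear separation function $\omega_1$ by the nonlinear one $\omega_2$. By Theorem \ref{th4.1}, $y\in K$ is an efficient solution of \eqref{question} if and only if $\mathcal{H}\cap\mathcal{K}=\emptyset$, so the whole task reduces to proving this disjointness. The separation apparatus is already in place: Theorem \ref{th5.2} tells us that $\omega_2$ is a weak separation function with respect to $\mathcal{H}$, which furnishes the inclusion $\mathcal{H}\subseteq\mathrm{lev}_{\geq0}\omega_2(\cdot;\varphi,\gamma)$ for every admissible pair $(\varphi,\gamma)\in\Pi_2$.

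First I would translate the hypothesis into a statement about $\mathcal{K}$. Since $M_y(x)=(\exp_{f(y)}^{-1}f(x),g(x))$ ranges exactly over $\mathcal{K}$ as $x$ runs through $M$ by \eqref{EQ6}, the assumed inequality $\omega_2(M_y(x);\varphi,\gamma)\leq0$ for all $x\in M$ says precisely that $\mathcal{K}\subseteq\mathrm{lev}_{\leq0}\omega_2(\cdot;\varphi,\gamma)$ for the particular parameter pair whose existence is postulated. I would then argue by contradiction: suppose $(\bar u,\bar v)\in\mathcal{H}\cap\mathcal{K}$. By the definition of $\mathcal{K}$ there is $\bar x\in M$ with $\bar u=\exp_{f(y)}^{-1}f(\bar x)$ and $\bar v=g(\bar x)$, while membership in $\mathcal{H}$ forces $\bar u\in C_{f(y)}\setminus\{0\}$ and $\bar v\in R_+^l$. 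Evaluating the two pieces of $\omega_2$ on this point is routine: because $\varphi\in C_{f(y)}^*$ and $\bar u\in C_{f(y)}$ we get $\langle\varphi,\bar u\rangle\geq0$, so Lemma \ref{pr2.4}(iii)--(iv) gives $\widetilde{\omega}(\bar u;\varphi)=-\Delta_{R_+}(\langle\varphi,\bar u\rangle)\geq0$; and since $\bar v\in R_+^l$, Lemma \ref{pr2.5}(ii) yields $\xi_{R_+^l,\gamma}(\bar v)\leq0$, whence $\underline{\omega}(\bar v;\gamma)\geq0$ (and $=0$ when $\gamma=0_l$). Adding the two terms, $\omega_2(\bar u,\bar v;\varphi,\gamma)\geq0$, which together with the hypothesis pins the value to $0$.

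The main obstacle is exactly this boundary case $\omega_2(\bar u,\bar v;\varphi,\gamma)=0$: the weak separation property only delivers $\mathcal{H}\subseteq\mathrm{lev}_{\geq0}\omega_2$, not the strict inclusion $\mathcal{H}\subseteq\mathrm{lev}_{>0}\omega_2$ that made the analogous step in Theorem \ref{th6.1} immediate, since there $\omega_1$ was a \emph{regular} weak separation function. To close the gap I would sharpen one of the two estimates into a strict one on the common point, for instance by exploiting that either $\langle\varphi,\bar u\rangle>0$, which holds as soon as $\varphi$ is taken in the interior of $C_{f(y)}^*$ and makes $\widetilde{\omega}(\bar u;\varphi)>0$ by Lemma \ref{pr2.4}(iii), or, when $\gamma\in-\mathrm{int}R_+^l$, that $\xi_{R_+^l,\gamma}(\bar v)<0$ via Lemma \ref{pr2.5}(i); either strict inequality contradicts $\omega_2(\bar u,\bar v;\varphi,\gamma)\leq0$. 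Once the contradiction is secured we obtain $\mathcal{H}\cap\mathcal{K}=\emptyset$, and Theorem \ref{th4.1} returns the efficiency of $y$. I would flag that, strictly speaking, the non-strict hypothesis $\omega_2\leq0$ requires this extra strictness input (equivalently, a strict hypothesis $\omega_2<0$ or the sharper parameter restrictions $\varphi\in\mathrm{int}\,C_{f(y)}^*$, $\gamma\in-\mathrm{int}R_+^l$), and I expect this to be the only genuinely delicate point in the argument.
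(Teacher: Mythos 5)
Your skeleton coincides with the paper's own proof, which consists of exactly the three moves you describe and nothing more: invoke Theorem \ref{th4.1} to reduce efficiency to $\mathcal{H}\cap\mathcal{K}=\emptyset$, quote Theorem \ref{th5.2} for the weak separation property of $\omega_2$, and observe that \eqref{6.5} gives $\mathcal{K}\subset\mathrm{lev}_{\leq0}\,\omega_2(\cdot;\varphi,\gamma)$ --- at which point the paper declares the proof complete. The boundary case you isolate is therefore not a defect of your write-up but a genuine gap in the paper itself: since $\omega_2$ is only a \emph{weak} separation function, the available inclusion is $\mathcal{H}\subseteq\mathrm{lev}_{\geq0}\,\omega_2(\cdot;\varphi,\gamma)$, so a point of $\mathcal{H}\cap\mathcal{K}$ merely forces $\omega_2=0$ there, and no contradiction follows; the regularity $\mathcal{H}=\bigcap_\theta\mathrm{lev}_{>0}\,\omega_1$ that closed this step in Theorem \ref{th6.1} has no analogue here, and the inclusion $\bigcap_{\pi}\mathrm{lev}_{>0}\,\omega_2\subseteq\mathcal{H}$ that the paper cites points in the useless direction. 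Your diagnosis can even be sharpened: taking $\varphi=0\in C_{f(y)}^*$ and $\gamma=0_l\in\Gamma$ gives $\omega_2\equiv-\Delta_{R_+}(0)=0\leq0$ identically, so hypothesis \eqref{6.5} is satisfiable for \emph{every} $y\in K$ and the theorem as stated is vacuous without some strictness input.

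Two cautions on your proposed repairs. First, the $\gamma$-branch does not work: for $\bar v\in R_+^l$, Lemma \ref{pr2.5}(i) with $r=0$ gives $\xi_{R_+^l,\gamma}(\bar v)<0$ only when $\bar v\in\mathrm{int}\,R_+^l$, so a point of $\mathcal{H}$ with $\bar v\in\mathrm{bd}\,R_+^l$ still yields $\underline{\omega}(\bar v;\gamma)=0$ and no strictness. Second, the alternative of a strict hypothesis $\omega_2<0$ on all of $M$ is unattainable: at $x=y$ one has $M_y(y)=(0,g(y))$ by \eqref{EQ6}, hence $\widetilde{\omega}(0;\varphi)=-\Delta_{R_+}(0)=0$ and $\underline{\omega}(g(y);\gamma)\geq0$, so $\omega_2(M_y(y);\varphi,\gamma)\geq0$ always. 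That leaves your $\varphi$-branch as the one viable fix: requiring $\varphi\in\mathrm{int}\,C_{f(y)}^*$ (which presupposes $C_{f(y)}$ pointed, so that the interior of the polar is nonempty) forces $\langle\varphi,\bar u\rangle>0$ for $\bar u\in C_{f(y)}\setminus\{0\}$, whence $\widetilde{\omega}(\bar u;\varphi)>0$ by Lemma \ref{pr2.4}(iii), and the contradiction with \eqref{6.5} goes through. With that restriction your argument is complete and is, in substance, the corrected version of the paper's proof.
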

	
	\begin{proof} From Theorem \ref{th4.1}, we know that $y\in K$ is an efficient solution of problem \eqref{question} iff \eqref{6.2} holds. From Theorem \ref{th5.2}, we have $$\bigcap_{\pi\in\Pi_2}\text{lev}_{>0}\omega_2(u,v;\varphi,\gamma)\subseteq\mathcal{H}.$$ 
		It is easy to get $\mathcal{K}\subset\mathrm{lev}_{\leq0}\omega_2(u,v;\varphi,\gamma)$	by \eqref{6.5}. This completes the proof.
	\end{proof}
	
	Next, we apply the two classes of separation functions to establish sufficient and necessary optimality conditions for GOP, in particular saddle-point type conditions.
	
	We introduce two classes of generalized Lagrangian functions $\mathcal{L}_y^1:M\times\mathcal{C}_{f(y)}^*\setminus\{0\}\times V^*\to R$ and  $\mathcal{L}_y^2:M\times\mathcal{T}\times\Gamma\to R$, which are constructed by weak separation functions with respect to $y\in K$:
	\begin{equation}
	\mathcal{L}_y^1(x;\theta,\lambda)=-\langle\theta,\exp_{f(y)}^{-1}f(x)\rangle-\langle\lambda,g(x)\rangle,
	\end{equation}
	\begin{equation}\label{l2}
		\mathcal{L}_y^2(x;\varphi,\gamma)=-\tilde{\omega}\big(\exp_{f(y)}^{-1}f(x);\varphi\big)-\underline{\omega}(g(x);\gamma).
	\end{equation}
	
	\begin{Definition}\label{de6.1}
	 The point $(y,\bar{\lambda})\in M\times V^*$ is a generalized saddle point of $\mathcal{L}_y^1(x;\theta,\lambda)$ iff for any $x\in M$ and $\lambda\in V^{*}$ there exists $\bar{\theta}\in C_{f(y)}^*\setminus\{0\},$ such that
		\begin{equation}
			\mathcal{L}_{y}^{1}(y;\bar{\theta},\lambda)\leq\mathcal{L}_{y}^{1}(y;\bar{\theta},\bar{\lambda})\leq\mathcal{L}_{y}^{1}(x;\bar{\theta},\bar{\lambda}).
		\end{equation}
	\end{Definition}

	\begin{Definition}\label{de6.2}
		The point $(y,\bar{\gamma})\in M\times\Gamma $ is a generalized saddle point of $\mathcal{L}_y^2(x;\varphi,\gamma)$ iff for any $x\in M$ and $\gamma\in\Gamma,$  there exists $\bar{\varphi}\in C_{f(y)}^*$ such that
		\begin{equation}\label{6.10}
			\mathcal{L}_y^2(y;\bar{\varphi},\gamma)\leq\mathcal{L}_y^2(y;\bar{\varphi},\bar{\gamma})\leq\mathcal{L}_y^2(x;\bar{\varphi},\bar{\gamma}). 
		\end{equation}
	\end{Definition}
	
	Now we can characterize the separation of two sets in an image space by using the extence of saddle point of generalized Lagrange functions $\mathcal{L}_{y}^{i}, i=1,2.$
	
	\begin{Theorem}\label{th6.3}
		Let $y\in K$ be a given point, $(y,\bar{\lambda})$ is the generalized saddle point of $\mathcal{L}_y^1(x;\bar{\theta},\lambda)$ iff there exists $\bar{\theta}\in C_{f(y)}^*\setminus\{0\}$ and $\bar{\lambda}\in V^*$, such that \eqref{6.1} holds.
	\end{Theorem}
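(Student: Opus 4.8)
The plan is to unfold Definition~\ref{de6.1} into its two constituent inequalities and match them against the multiplier condition \eqref{6.1} together with a complementary-slackness identity. The computational engine is the elementary fact from Definition~\ref{de2.1} that $\exp_{f(y)}^{-1}f(y)=0$, which gives $\mathcal{L}_y^1(y;\theta,\lambda)=-\langle\theta,0\rangle-\langle\lambda,g(y)\rangle=-\langle\lambda,g(y)\rangle$ for every $\theta$ and $\lambda$. This collapses the Lagrangian at the base point $y$ to a function of $\lambda$ alone, which is what makes the left-hand saddle inequality tractable. I will also use repeatedly that $V^*$ is a convex cone containing $0$ and closed under positive scaling, and that $\langle\lambda,g(y)\rangle\geq0$ for every $\lambda\in V^*$ because $y\in K$ forces $g(y)\in R_+^l$.

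For the forward implication, I would assume $(y,\bar\lambda)$ is a generalized saddle point and let $\bar\theta\in C_{f(y)}^*\setminus\{0\}$ be the associated parameter supplied by Definition~\ref{de6.1}. The left inequality $\mathcal{L}_y^1(y;\bar\theta,\lambda)\leq\mathcal{L}_y^1(y;\bar\theta,\bar\lambda)$, after the simplification above, reads $\langle\bar\lambda,g(y)\rangle\leq\langle\lambda,g(y)\rangle$ for all $\lambda\in V^*$. Substituting $\lambda=0$ yields $\langle\bar\lambda,g(y)\rangle\leq0$, and substituting $\lambda=2\bar\lambda\in V^*$ yields $\langle\bar\lambda,g(y)\rangle\geq0$, so $\langle\bar\lambda,g(y)\rangle=0$. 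Feeding this into the right inequality $\mathcal{L}_y^1(y;\bar\theta,\bar\lambda)\leq\mathcal{L}_y^1(x;\bar\theta,\bar\lambda)$, whose left side is now $0$, I obtain $0\leq-\langle\bar\theta,\exp_{f(y)}^{-1}f(x)\rangle-\langle\bar\lambda,g(x)\rangle$ for all $x\in M$, which is exactly \eqref{6.1} with these $\bar\theta,\bar\lambda$.

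For the converse, I would start from the hypothesis that \eqref{6.1} holds for some $\bar\theta\in C_{f(y)}^*\setminus\{0\}$ and $\bar\lambda\in V^*$. Evaluating \eqref{6.1} at $x=y$ and using $\exp_{f(y)}^{-1}f(y)=0$ gives $\langle\bar\lambda,g(y)\rangle\leq0$, while feasibility of $y$ gives $\langle\bar\lambda,g(y)\rangle\geq0$; hence $\langle\bar\lambda,g(y)\rangle=0$ again. With this identity in hand I verify the two saddle inequalities directly: the left one is $-\langle\lambda,g(y)\rangle\leq0$, true because $\langle\lambda,g(y)\rangle\geq0$ for $\lambda\in V^*$; the right one is $0\leq-\langle\bar\theta,\exp_{f(y)}^{-1}f(x)\rangle-\langle\bar\lambda,g(x)\rangle$, which is just \eqref{6.1} rearranged. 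Thus $(y,\bar\lambda)$ is a generalized saddle point with parameter $\bar\theta$.

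The crux of the argument, and the step I expect to require the most care, is the complementary-slackness identity $\langle\bar\lambda,g(y)\rangle=0$: it is the hinge on which both directions turn, and deriving it correctly depends on exploiting the cone structure of $V^*$ (the choices $\lambda=0$ and $\lambda=2\bar\lambda$) in the forward direction and on combining \eqref{6.1} at $x=y$ with the feasibility $g(y)\in R_+^l$ in the converse. A secondary point worth stating explicitly is the role of the \emph{existential} quantifier on $\bar\theta$ in Definition~\ref{de6.1}: one implication must \emph{produce} a suitable $\bar\theta$ and the other must \emph{consume} it, and I will keep the same symbol throughout so that the $\bar\theta$ appearing in the saddle-point characterization is the very multiplier guaranteed by \eqref{6.1}.
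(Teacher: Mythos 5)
Your proof is correct and follows essentially the same route as the paper's: collapse the Lagrangian at $y$ via $\exp_{f(y)}^{-1}f(y)=0$, extract the complementary-slackness identity $\langle\bar{\lambda},g(y)\rangle=0$, and then match the two saddle inequalities of Definition~\ref{de6.1} against \eqref{6.1} in both directions. The only cosmetic difference is that in the forward direction you obtain $\langle\bar{\lambda},g(y)\rangle\geq0$ by testing $\lambda=2\bar{\lambda}$ in the left saddle inequality, whereas the paper gets it directly from feasibility ($y\in K$, so $g(y)\in R_+^l$, together with $\bar{\lambda}\in V^*$) --- both are one-line observations and the arguments are otherwise identical.
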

	
	\begin{proof}
		$\text{``}\Rightarrow\text{''}$
	Since $(y,\bar{\lambda})$ is a saddle point,
	$$\begin{aligned}
		& -\langle\bar{\theta}, \exp_{f(y)}^{-1} f(y)\rangle - \langle\lambda, g(y)\rangle \\
		& \leq -\langle\bar{\theta}, \exp_{f(y)}^{-1} f(y)\rangle - \langle\bar{\lambda}, g(y)\rangle \\
		& \leq -\langle\bar{\theta}, \exp_{f(y)}^{-1} f(x)\rangle - \langle\bar{\lambda}, g(x)\rangle.
	\end{aligned}
	$$
	Taking $\lambda=0$ in $\mathcal{L}_y^1(\mathrm{y};\bar{\theta},\lambda)\leq\mathcal{L}_y^1(\mathrm{y};\bar{\theta},\bar{\lambda})$, and we have $$\langle\bar{\lambda},g(y)\rangle\leq0.$$
	On the other hand, since $y\in K$, $\bar{\lambda}\in V^*$, we have $\langle\bar{\lambda},g(y)\rangle\geq0$, and $\langle\bar{\lambda},g(y)\rangle=0$.
	
	Therefore, for any $x\in M$ one has $\mathcal{L}_y^1(y;\bar{\theta},\bar{\lambda})\leq\mathcal{L}_y^1(x;\bar{\theta},\bar{\lambda})$, for any $x\in M$.
	This implies that
	$$\begin{aligned}
		& -\langle\bar{\theta}, \exp_{f(y)}^{-1} f(y)\rangle - \langle\bar{\lambda}, g(y)\rangle \leq -\langle\bar{\theta}, \exp_{f(y)}^{-1} f(x)\rangle - \langle\bar{\lambda}, g(x)\rangle \\
		& \Leftrightarrow \langle\bar{\theta}, \exp_{f(y)}^{-1} f(x)\rangle + \langle\bar{\lambda}, g(x)\rangle \leq \langle\bar{\theta}, \exp_{f(y)}^{-1} f(y)\rangle + \langle\bar{\lambda}, g(y)\rangle = 0.
	\end{aligned}
	$$	
		
	$\text{``}\Leftarrow\text{''}$	By setting $x=y$ in \eqref{6.1}, we have  
	$$\langle\bar{\theta},\exp_{f(y)}^{-1}f(y)\rangle+\langle\bar{\lambda},g(y)\rangle=\langle\bar{\lambda},g(y)\rangle\leq0.$$
	On the other hand, since $y\in K,\bar{\lambda}\in V^*$, we have $\langle\bar{\lambda},g(y)\rangle\geq0$, and so
	\begin{equation}\label{6.11}
		\left\langle\bar{\lambda},g(y)\right\rangle=0.
	\end{equation}
	
	For any $\lambda\in V^{*}$, we have
	$$\langle\lambda,g(y)\rangle\geq0=\langle\bar{\lambda},g(y)\rangle,$$ 
	and for any $x\in M$, we have
	$$\langle\bar{\theta},\exp_{f(y)}^{-1}f(y)\rangle+\langle\bar{\lambda},g(y)\rangle\leq\langle\bar{\theta},\exp_{f(y)}^{-1}f(y)\rangle+\langle\lambda,g(y)\rangle.$$
	Therefore $\mathcal{L}_y^1(\mathrm{y};\bar{\theta},\lambda)\leq\mathcal{L}_y^1(\mathrm{y};\bar{\theta},\bar{\lambda})$.
	
	Combining \eqref{6.1} and \eqref{6.11}, for any $x\in M$,
	$$\langle\bar{\theta},\exp_{f(y)}^{-1}f(x)\rangle+\langle\bar{\lambda},g(x)\rangle-\langle\bar{\lambda},g(y)\rangle\leq0,$$
	i.e.,  for any $x\in M$,
	$$\langle\bar{\theta},\exp_{f(y)}^{-1}f(x)\rangle+\langle\bar{\lambda},g(x)\rangle\leq0+\langle\bar{\lambda},g(y)\rangle=\langle\bar{\theta},\exp_{f(y)}^{-1}f(y)\rangle+\langle\bar{\lambda},g(y)\rangle.$$
	Then we have  $\mathcal{L}_y^1(y;\bar{\theta},\bar{\lambda})\leq\mathcal{L}_y^1(x;\bar{\theta},\bar{\lambda}).$

	\end{proof}

We study the dual problem in the rest of this section. We give another equivalent characterization of \eqref{6.1} by the duality. 

For each fixed pair $(\theta,\lambda)$, the corresponding dual problem is defined as follows:
	\begin{equation}\label{6.12}
		\begin{aligned}\max&\quad\omega_1(u,v;\theta,\lambda),\\\mathrm{s.t~}&\;\;\;\;(u,v)\in\mathcal{K},\end{aligned}
	\end{equation}
	where $\omega_{1}$ is given by \eqref{EQ9} , and $(\tilde{u},\tilde{v})\in\mathcal{K}$ is the maximum point of \eqref{6.12} iff
	\begin{equation}\label{6.13}
		\omega_1(\tilde{u},\tilde{v};\theta,\lambda)\geq\omega_1(u,v;\theta,\lambda),\;\;\forall(u,v)\in\mathcal{K}.
	\end{equation}
	
	\begin{Lemma}\label{le6.1}
		If there exists a maximum point in \eqref{6.12}, then for any $\theta\in C_{f(y)}^*\setminus\{0\}$ and $\lambda\in V^*,$ we have
		\begin{equation}\label{6.14}
			\max_{(u,v)\in\mathcal{K}}\omega_1(u,v;\theta,\lambda)\geq0.
		\end{equation}
	\end{Lemma}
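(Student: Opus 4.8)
The plan is to produce the bound by exhibiting a single explicit point of $\mathcal{K}$ at which $\omega_1$ is already nonnegative, and then invoke the definition of the maximum. Since the hypothesis guarantees that the maximum in \eqref{6.12} is attained, it suffices to show
$$\omega_1(u_0,v_0;\theta,\lambda)\geq0$$
for some $(u_0,v_0)\in\mathcal{K}$; the maximum over $\mathcal{K}$ is then at least this value.

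The natural witness is the image of the feasible point $y$ itself. Recalling the description of $\mathcal{K}$ in \eqref{EQ6}, every pair $(\exp_{f(y)}^{-1}f(x),g(x))$ with $x\in M$ belongs to $\mathcal{K}$. Taking $x=y$ and using the inverse exponential map from Definition \ref{de2.1}, one has $\exp_{f(y)}^{-1}f(y)=0$, since the base point and target coincide. Hence the pair $(0,g(y))$ lies in $\mathcal{K}$, and this is the point I would plug in.

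The final step is a direct evaluation. By the form of $\omega_1$ in \eqref{EQ9},
$$\omega_1(0,g(y);\theta,\lambda)=\langle\theta,0\rangle+\langle\lambda,g(y)\rangle=\langle\lambda,g(y)\rangle.$$
Because $y\in K$ is feasible we have $g(y)\in R_+^l$, and because $\lambda\in V^*$ the defining property of the polar cone gives $\langle\lambda,g(y)\rangle\geq0$. Combining these,
$$\max_{(u,v)\in\mathcal{K}}\omega_1(u,v;\theta,\lambda)\geq\omega_1(0,g(y);\theta,\lambda)=\langle\lambda,g(y)\rangle\geq0,$$
which is \eqref{6.14}. Note that the choice of $\theta\in C_{f(y)}^*\setminus\{0\}$ plays no role beyond well-definedness, since the $\theta$-term vanishes at the witness point.

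There is no serious obstacle in this argument; the only point requiring care is the identification of the correct witness, namely that feasibility of $y$ together with the vanishing of $\exp_{f(y)}^{-1}f(y)$ is exactly what forces the $u$-component to drop out and leaves the manifestly nonnegative quantity $\langle\lambda,g(y)\rangle$. Everything else is the definition of the maximum and the defining inequality of $V^*$.
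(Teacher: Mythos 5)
Your proposal is correct and matches the paper's own argument: both evaluate $\omega_1$ at the witness $(\tilde u,\tilde v)=(\exp_{f(y)}^{-1}f(y),g(y))=(0,g(y))\in\mathcal{K}$, reduce it to $\langle\lambda,g(y)\rangle\geq0$ via feasibility of $y$ and $\lambda\in V^*$, and then bound the attained maximum from below. If anything, your version is slightly cleaner, since the paper's closing case split (whether or not $(\tilde u,\tilde v)$ is itself the maximum point) is unnecessary once one observes, as you do, that the attained maximum dominates the value at any single point of $\mathcal{K}$.
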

	
	\begin{proof}
		Let $\tilde{u}=\exp_{f(y)}^{-1}f(y)=0,\tilde{v}=g(y)$, it is easy to see that $(\tilde{u},\tilde{v})\in\mathcal{K}$ and 
		$$\omega_1(\tilde{u},\tilde{v};\theta,\lambda)=\langle\theta,\exp_{f(y)}^{-1}f(y)\rangle+\langle\lambda,g(y)\rangle=\langle\lambda,g(y)\rangle .$$
		Since $\lambda\in V^{*}$, $\omega_1(\tilde{u},\tilde{v};\theta,\lambda)\geq0$. Therefore, if $(\tilde{u},\tilde{v})$ is the maximum point, then \eqref{6.14} holds. If not, let $(u^0,v^0)\in\mathcal{K}$ be the maximum point of \eqref{6.14}, we have 
		$$\omega_1(u^0,v^0;\theta,\lambda)>\omega_1(\tilde{u},\tilde{v};\theta,\lambda)\geq0.$$
		This completes the proof. 
	\end{proof}
	
	From the above Lemma, we have 
	$$\omega_1(u^0,v^0;\theta,\lambda)-\omega_1(\tilde{u},\tilde{v};\theta,\lambda)>0,$$
	and $-\omega_1(u^0,v^0;\theta,\lambda)<0$.
	
	After discussing this class of regular weak separation functions, we can further explore the duality theory. Under the assumption of regular weak separation, we are able to demonstrate that the duality gap equals zero. First, we introduce the definition of image duality gap.
	
	\begin{Definition}\label{de6.3}
		${\Omega}$ is said to be a image duality gap iff it is the optimal value for the following problem:
		\begin{equation}\label{minmax}
			\min_{\theta\in C_{f(y)}^*\setminus\{0\}}\max_{(u,v)\in\mathcal{K}}\omega_1(u,v;\theta,\lambda).
		\end{equation}
		
	\end{Definition}
	This allows us to present the following duality theorem.
	\begin{Theorem}\label{th6.5}
		The image duality gap $\Omega=0$ iff for any $(u,v)\in\mathcal{K}$, there exists $\bar{\theta}\in C_{f(y)}^*\setminus\{0\}$ and $\bar{\lambda}\in V^*$, such that
		\begin{equation}\label{6.16}
			\omega_1(u,v;\tilde{\theta},\tilde{\lambda})\leq0.
		\end{equation}
	\end{Theorem}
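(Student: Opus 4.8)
The plan is to reduce everything to Lemma \ref{le6.1}, which already guarantees that the inner maximum $\max_{(u,v)\in\mathcal{K}}\omega_1(u,v;\theta,\lambda)$ is nonnegative for every admissible pair $(\theta,\lambda)$. Consequently the outer minimum defining the image duality gap satisfies $\Omega\geq 0$ automatically, and the entire theorem reduces to characterizing exactly when this lower bound is attained. I would state this observation at the outset and use it in both directions of the equivalence.

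For the sufficiency direction (existence of $\bar{\theta},\bar{\lambda}\Rightarrow\Omega=0$), I would note that the hypothesis $\omega_1(u,v;\bar{\theta},\bar{\lambda})\leq 0$ for all $(u,v)\in\mathcal{K}$ is precisely the statement that the inner maximum at $(\bar{\theta},\bar{\lambda})$ is $\leq 0$. Combining this with Lemma \ref{le6.1}, which gives the reverse inequality $\geq 0$, forces
$$\max_{(u,v)\in\mathcal{K}}\omega_1(u,v;\bar{\theta},\bar{\lambda})=0.$$
Since the outer minimum over $\theta\in C_{f(y)}^*\setminus\{0\}$ is bounded below by $0$ by Lemma \ref{le6.1} and achieves the value $0$ at the particular parameter $\bar{\theta}$, I conclude $\Omega=0$ directly from the definition \eqref{minmax}.

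For the necessity direction ($\Omega=0\Rightarrow$ existence), I would assume the minimum in \eqref{minmax} is attained at some $\bar{\theta}\in C_{f(y)}^*\setminus\{0\}$, so that
$$\max_{(u,v)\in\mathcal{K}}\omega_1(u,v;\bar{\theta},\lambda)=\Omega=0.$$
Setting $\bar{\lambda}$ equal to the parameter $\lambda$ carried in the definition of the gap, the defining property of the maximum immediately yields $\omega_1(u,v;\bar{\theta},\bar{\lambda})\leq 0$ for every $(u,v)\in\mathcal{K}$, which is exactly \eqref{6.16} (with $\tilde{\theta}=\bar{\theta}$, $\tilde{\lambda}=\bar{\lambda}$).

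The main obstacle is the attainment of the outer minimum in the necessity direction: the clean argument above presumes a minimizer $\bar{\theta}$ exists. I would either invoke this as an assumption consistent with the phrasing that ``$\Omega$ is the optimal value'' and with the existence-of-maximum hypothesis already used in Lemma \ref{le6.1}, or else argue along an infimizing sequence $\theta_n$ and pass to a limit, using the continuity properties of $\omega_1$ together with Lemma \ref{le2.2} on the tangent bundle to keep $\bar{\theta}\in C_{f(y)}^*\setminus\{0\}$ in the limit. A secondary point requiring care is the bookkeeping of $\lambda$: the minimization in \eqref{minmax} is taken only over $\theta$, so $\bar{\lambda}$ must be identified with the $\lambda$ implicit in that definition, and the tilde/bar notation in the statement should be read as the same parameters.
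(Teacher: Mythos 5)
Your proposal is correct and takes essentially the same route as the paper's own proof: both directions hinge on Lemma \ref{le6.1} making the inner maximum in \eqref{minmax} nonnegative, with the hypothesis \eqref{6.16} (respectively, the assumed attainment of the optimal value $\Omega=0$ at some $\bar{\theta}$) pinning that maximum to zero. You additionally flag the attainment-of-the-minimizer issue and the bookkeeping of $\lambda$ in \eqref{minmax}, both of which the paper's proof passes over silently, so these remarks are added care rather than a deviation.
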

	
	\begin{proof}
			$\text{``}\Rightarrow\text{''}$ Suppose that $\Omega = 0$, then there exists $(\tilde{\theta},\tilde{\lambda})\in C_{f(y)}^*\setminus\{0\}\times V^*$ such that $$\max_{(u,v)\in\mathcal{K}}\omega_1(u,v;\tilde{\theta},\tilde{\lambda})=0.$$
		By the definition of maximum, there exists $(\tilde{u},\tilde{v})\in\mathcal{K}$, for any $(u, v)\in\mathcal{K}$ such that $$\omega_1(u,v;\tilde{\theta},\tilde{\lambda})\leq\omega_1(\tilde{u},\tilde{v};\tilde{\theta},\tilde{\lambda})=0.$$

		$\text{``}\Leftarrow\text{''}$ Set $x=y$, and $$(\tilde{u},\tilde{v})=\left(\exp_{f(y)}^{-1}f(y),g(y)\right)=(0,g(y))\in\mathcal{K}.$$
		Since $y\in K$ and $\tilde{\lambda}\in V^*$, we can get $$\omega_1(\tilde{u},\tilde{v};\tilde{\theta},\tilde{\lambda})=\langle\tilde{\lambda},g(y)\rangle\geq0.$$ 
		Because $\omega_1(\tilde{u},\tilde{v};\tilde{\theta},\tilde{\lambda})=0$, this means that $(\tilde{u},\tilde{v})$ is the maximum point of $\omega_1(u, v; \tilde{\theta},\tilde{\lambda})$, and $z=0$ is the optimal value for problem \eqref{6.12}. Besides, by Lemma \ref{le6.1}, we know that $0$ is also the optimal value for \eqref{minmax}, that is $\Omega = 0$.
	\end{proof}
	
	Next, we study anoher generalized Lagrangian function defined in \eqref{l2}, and present the relationship between the separation in the IS and the existence of saddle points of $\mathcal{L}_y^2(x;\varphi,\gamma)$.
	
	\begin{Theorem}\label{th6.6}
		For any given $y\in K$, $(y,\bar{\gamma})$ is the generalized saddle point of $\mathcal{L}_y^2(x;\bar{\varphi},\gamma)$ iff there exists $\bar{\varphi}\in C_{f(y)}^*$ and $\bar{\gamma}\in\Gamma^*,$ such that \eqref{6.5} holds.
	\end{Theorem}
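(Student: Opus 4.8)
The plan is to mirror the proof of Theorem~\ref{th6.3}, replacing the bilinear pairing by the nonlinear building blocks $\widetilde{\omega}$ and $\underline{\omega}$ and exploiting the properties of the oriented distance function (Lemma~\ref{pr2.4}) and the Gerstewitz function (Lemma~\ref{pr2.5}) recorded in Section~2. The core observation is that, since $\exp_{f(y)}^{-1}f(y)=0$, we have $\langle\varphi,\exp_{f(y)}^{-1}f(y)\rangle=0$, and because $0\in\text{bd}R_+$, Lemma~\ref{pr2.4}(iv) gives $\Delta_{R_+}(0)=0$, hence $\widetilde{\omega}(\exp_{f(y)}^{-1}f(y);\varphi)=0$ for every $\varphi\in C_{f(y)}^*$. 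Consequently $\mathcal{L}_y^2(y;\varphi,\gamma)=-\underline{\omega}(g(y);\gamma)$ for all $(\varphi,\gamma)$. Moreover $y\in K$ forces $g(y)\in R_+^l$, so Lemma~\ref{pr2.5}(ii) yields $\xi_{R_+^l,\gamma}(g(y))\le 0$ whenever $\gamma\in-\text{int}R_+^l$; together with \eqref{EQ20} this shows $\underline{\omega}(g(y);\gamma)\ge 0$ for every $\gamma\in\Gamma$, with equality at $\gamma=0_l$. Thus $\mathcal{L}_y^2(y;\varphi,\gamma)\le 0$ for all $\gamma\in\Gamma$ while $\mathcal{L}_y^2(y;\varphi,0_l)=0$; these two facts are the engine of both implications.

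For the direction ``$\Rightarrow$'', assume $(y,\bar{\gamma})$ is a generalized saddle point with associated $\bar{\varphi}\in C_{f(y)}^*$. Evaluating the left-hand inequality of \eqref{6.10} at $\gamma=0_l$ and using $\mathcal{L}_y^2(y;\bar{\varphi},0_l)=0$ gives $\mathcal{L}_y^2(y;\bar{\varphi},\bar{\gamma})\ge 0$; combined with $\mathcal{L}_y^2(y;\bar{\varphi},\bar{\gamma})=-\underline{\omega}(g(y);\bar{\gamma})\le 0$ this forces $\mathcal{L}_y^2(y;\bar{\varphi},\bar{\gamma})=0$, a complementary-slackness type identity $\underline{\omega}(g(y);\bar{\gamma})=0$. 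The right-hand inequality of \eqref{6.10} then reads $0=\mathcal{L}_y^2(y;\bar{\varphi},\bar{\gamma})\le\mathcal{L}_y^2(x;\bar{\varphi},\bar{\gamma})$ for every $x\in M$, i.e. $\widetilde{\omega}(\exp_{f(y)}^{-1}f(x);\bar{\varphi})+\underline{\omega}(g(x);\bar{\gamma})\le 0$, which is precisely \eqref{6.5}.

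For the direction ``$\Leftarrow$'', suppose $\bar{\varphi}\in C_{f(y)}^*$ and $\bar{\gamma}\in\Gamma$ satisfy \eqref{6.5}, equivalently $\mathcal{L}_y^2(x;\bar{\varphi},\bar{\gamma})\ge 0$ for all $x\in M$. Setting $x=y$ and invoking $\mathcal{L}_y^2(y;\bar{\varphi},\bar{\gamma})\le 0$ again pins down $\mathcal{L}_y^2(y;\bar{\varphi},\bar{\gamma})=0$. The right-hand inequality of \eqref{6.10} is then just the hypothesis $0\le\mathcal{L}_y^2(x;\bar{\varphi},\bar{\gamma})$, while the left-hand inequality follows from $\mathcal{L}_y^2(y;\bar{\varphi},\gamma)=-\underline{\omega}(g(y);\gamma)\le 0=\mathcal{L}_y^2(y;\bar{\varphi},\bar{\gamma})$ for every $\gamma\in\Gamma$. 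Hence $(y,\bar{\gamma})$ is a generalized saddle point.

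The routine algebra is benign; the step requiring the most care is the complementary-slackness identity $\underline{\omega}(g(y);\bar{\gamma})=0$, since it is the only place where the two cases of \eqref{EQ20} interact and where the sign information from Lemmas~\ref{pr2.4} and \ref{pr2.5} must be combined in exactly the right direction. One should also be mindful of the quantifier order in Definition~\ref{de6.2}: the intended reading is that a single $\bar{\varphi}$ serves for all $x$ and $\gamma$, and it is precisely this fixed $\bar{\varphi}$ (the one supplied by \eqref{6.5}) that must be carried through the argument.
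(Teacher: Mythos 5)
Your proposal is correct and follows essentially the same route as the paper's own proof: both establish $\widetilde{\omega}\bigl(\exp_{f(y)}^{-1}f(y);\bar{\varphi}\bigr)=0$, derive the complementary-slackness identity $\underline{\omega}(g(y);\bar{\gamma})=0$ by testing the left saddle inequality at a parameter with $\underline{\omega}\equiv 0$ (your explicit $\gamma=0_l$ is exactly the paper's $\tilde{\gamma}$ from \eqref{EQ22}), and then read off \eqref{6.5} and the two saddle inequalities. If anything, your justification that $\widetilde{\omega}$ vanishes at $y$ (via $\exp_{f(y)}^{-1}f(y)=0$ and $\Delta_{R_+}(0)=0$ from Lemma \ref{pr2.4}(iv)) is cleaner than the paper's ``since $y\in K$'' remark, and your caveat about the quantifier on $\bar{\varphi}$ in Definition \ref{de6.2} matches the paper's tacit reading.
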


	\begin{proof}
		$\text{``}\Rightarrow\text{''}$: Since $(y,\bar{\gamma})$ is the generalized saddle point of $\mathcal{L}_y^2(x;\bar{\varphi},\gamma),$ we have 
		\begin{equation}\label{6.17}
			\begin{split}
				& -\tilde{\omega}(\operatorname{\exp}_{f(y)}^{-1}f(y);\bar{\varphi}) - \underline{\omega}(g(y);\gamma)  \\
				& \leq -\tilde{\omega}(\operatorname{\exp}_{f(y)}^{-1}f(y);\bar{\varphi}) - \underline{\omega}(g(y);\bar{\gamma}) \\
				& \leq -\tilde{\omega}(\operatorname{\exp}_{f(y)}^{-1}f(x);\bar{\varphi}) - \underline{\omega}(g(x);\bar{\gamma}).
			\end{split}
		\end{equation}
		By \eqref{EQ22}, there exists $\tilde{\gamma}\in\Gamma $ such that $\underline{\omega}(\cdot;\tilde{\gamma})\equiv0$. Taking $\gamma=\tilde{\gamma}$ in the first inequality of \eqref{6.10}, we have $\underline{\omega}(g(y);\bar{\gamma})\leq0.$ 
		
		Combined with the fact that $y\in K$ implies that $g(y)\in D$ and $\underline{\omega}(g(y);\bar{\gamma})\geq0$,we can deduce that $\underline{\omega}(g(y);\bar{\gamma})=0$. Therefore, from the second inequality in \eqref{6.17} and $\underline{\omega}(g(y);\bar{\gamma})=0$. we get that
		$$-\tilde{\omega}\big(\exp_{f(y)}^{-1}f(y);\bar{\varphi}\big)\leq-\tilde{\omega}\big(\exp_{f(y)}^{-1}f(x);\bar{\varphi}\big)-\underline{\omega}(g(x);\bar{\gamma}\big).$$
		Since $y\in K$, it follows that $\Delta_{R_+}\big(\big\langle\bar{\varphi},\exp_{f(y)}^{-1}f(y)\big\rangle\big)=0$, i.e., $\widetilde{\omega}\big(\exp_{f(y)}^{-1}f(y);\bar{\varphi}\big)=0$, i.e.,
		$$0\leq-\tilde{\omega}(\exp_{f(y)}^{-1}f(x);\bar{\varphi})-\underline{\omega}(g(x);\bar{\gamma}).$$
		Therefore \eqref{6.5} holds. 
		
		$\text{``}\Leftarrow\text{''}$: Setting $x = y$ in \eqref{6.5}, we get $\underline{\omega}(g(y);\bar{\gamma})\leq0$, and since $y\in K$, then $g(y) \in R_+^l$. It implies that $\underline{\omega}(g(y);\bar{\gamma})\geq0$, so $\underline{\omega}(g(y);\bar{\gamma})=0$. For any $x\in M$, by \eqref{6.5} and $\underline{\omega}(g(y);\bar{\gamma})=0$, we have
		$$-\tilde{\omega}\big(\exp_{f(y)}^{-1}f(y);\bar{\varphi}\big)-\underline{\omega}(g(y);\bar{\gamma})\leq-\tilde{\omega}\big(\exp_{f(y)}^{-1}f(x);\bar{\varphi}\big)-\underline{\omega}(g(x);\bar{\gamma}),$$
		that is $\mathcal{L}_y^2(y;\bar{\varphi},\bar{\gamma})\leq\mathcal{L}_y^2(x;\bar{\varphi},\bar{\gamma}).$
		
		On the other hand, since $y\in K$, and $g(y)\in D$, then for any $\gamma\in\Gamma$ we have $\underline{\omega}(g(y);\gamma)\geq0$. Therefore, for any $x\in M$,
		$$-\tilde{\omega}\Big(\exp_{f(y)}^{-1}f(y);\bar{\varphi}\Big)-\underline{\omega}(g(y);\;\gamma)\leq-\tilde{\omega}\Big(\exp_{f(y)}^{-1}f(y);\bar{\varphi}\Big)-\underline{\omega}(g(y);\;\bar{\gamma}),$$
		that is $\mathcal{L}_y^2(y;\bar{\varphi},\gamma)\leq\mathcal{L}_y^2(y;\bar{\varphi},\bar{\gamma}),$ so $(y,\bar{\gamma})$ is a generalized saddle point of the function $\mathcal{L}_y^2(x;\bar{\varphi},\gamma)$ on $M\times\Gamma $. 
	\end{proof}

In this section, we explored the generalized Lagrange functions generated by the two class of separation functions. We deduced the general necessary and sufficient conditions for optimality. Additionally, for the regular weak separation functions, we delved into the duality theory and demonstrated that the presence of regular separation ensures the existence of a zero duality gap.	
	\section{Scalarization}\noindent
\setcounter{equation}{0}	

	The scalarization method refers to transforming an vector optimization problem into a value optimization problem, this approach is a crucial method for solving vector optimization problems. In this section, we formulate a scalar minimization problem that can find at least one solution to the GOP \eqref{question}.
	
	Before proceeding to the scalarisation, let us recall the following definitions.
	
	\begin{Definition}\label{de7.1}(\cite{GF3})
		Let $Z$ be a nonempty set and $A$ be a convex cone in $R^n$ with $\text{int} A\neq\phi$. A function $F: Z\to{R}^n$ is said to be $A$-subconvexlike iff there exists $ a\in\text{int}A,$ such that for any $\epsilon>0$ and $\alpha\in[0,1],$
		$$(1-\alpha)F(Z)+\alpha F(Z)+\epsilon\alpha\subseteq F(Z)+A.$$
		
		If the condition $\text{``} $ there exists a $\in\text{int}A \text{''}$ is replaced by $\text{``}$ for any $a \in{A} \text{''}$, then the
		function $F: Z\to{R}^n$ is $A$-convexlike functions.
	\end{Definition}
	
	\begin{Definition}\label{de7.2}
	Let $f: M\to M$ be a given function, for any $x^{\prime}, x^{\prime\prime}\in M$ and $\alpha\in[0,1]$, a function $F(x)=\exp_{f(y)}^{-1}f(x): M\to T_{f(y)}M $ is said to be a $C_{f(y)}$-function iff it satisfies
	\begin{equation}\label{7.1}
		\left((1-\alpha)F(x^{\prime})+\alpha F(x^{\prime\prime})\right)-\exp_{f(y)}^{-1}\left((1-\alpha)f(x^{\prime})+\alpha f(x^{\prime\prime})\right)\in C_{f(y)}.
	\end{equation}
Specially, when $C_{f(y)}\subseteq T_{f(y)}M$, $F(X)$ is called $C_{f(y)}$-convex function.
\end{Definition}
	
	Now,  for any ${y} \in \mathrm{M}$, we consider the following sets:
	$$G(y)=\left\{x\in M\;|\;-\exp_{f(y)}^{-1}f(x)\in C_{f(y)}\right\}$$
	and
	$$G_p(y)=\{x\in M\;|\;\langle p,\exp_{f(y)}^{-1}f(y)\rangle\geq\langle p,\exp_{f(y)}^{-1}f(x)\rangle\}.$$
	
In the following discussion, we consider $p$ as a fixed value rather than a parameter. It is important to note that $y$ is now a parameter.

	\begin{Proposition}\label{pr7.1}
		If $F(x)=\exp_{f(y)}^{-1}$ is a $C_{f(y)}$-function, then for any ${y} \in \mathrm{M}$, the set $G(y)$ is convex .
	\end{Proposition}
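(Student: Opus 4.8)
The plan is to reduce the claim to two purely algebraic facts: that $C_{f(y)}$, being the parallel transport of the convex cone $C_p$, is itself a convex cone (hence closed under addition and under multiplication by nonnegative scalars), and that $F=\exp_{f(y)}^{-1}\comp f$ is a $C_{f(y)}$-function in the sense of Definition~\ref{de7.2}. Fix $y\in M$ and take $x',x''\in G(y)$; by the defining property of $G(y)$ this means $-F(x')\in C_{f(y)}$ and $-F(x'')\in C_{f(y)}$. For $\alpha\in[0,1]$ I would let $x_\alpha$ denote the combination of $x'$ and $x''$ specified by $f(x_\alpha)=(1-\alpha)f(x')+\alpha f(x'')$, so that $F(x_\alpha)=\exp_{f(y)}^{-1}\big((1-\alpha)f(x')+\alpha f(x'')\big)$. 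The goal is then exactly to show $-F(x_\alpha)\in C_{f(y)}$, i.e. $x_\alpha\in G(y)$.

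First I would exploit convexity of the cone: since $-F(x')$ and $-F(x'')$ both lie in $C_{f(y)}$ and the weights $1-\alpha,\alpha$ are nonnegative, the combination $-\big((1-\alpha)F(x')+\alpha F(x'')\big)$ lies in $C_{f(y)}$. Next I would invoke the $C_{f(y)}$-function inequality \eqref{7.1}, which with the notation above reads
$$(1-\alpha)F(x')+\alpha F(x'')-F(x_\alpha)\in C_{f(y)}.$$
Adding these two memberships and using that a convex cone is closed under addition, the mixed term $(1-\alpha)F(x')+\alpha F(x'')$ cancels and I obtain $-F(x_\alpha)\in C_{f(y)}$, whence $x_\alpha\in G(y)$. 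As $x',x''\in G(y)$ and $\alpha\in[0,1]$ were arbitrary, this yields the convexity of $G(y)$.

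The argument carries no curvature estimate, no analytic limit, and no appeal to the geodesic machinery of Section~2; the substance is entirely the cancellation of the cross term once closedness of $C_{f(y)}$ under addition and nonnegative scaling is recorded. Consequently the only genuinely delicate point is a bookkeeping one: I must state explicitly that convexity of the subset $G(y)\subseteq M$ is understood through the combination point $x_\alpha$ furnished by the $C_{f(y)}$-function structure, and that this $x_\alpha$ is well defined (equivalently, that $\exp_{f(y)}^{-1}\big((1-\alpha)f(x')+\alpha f(x'')\big)$ is meaningful), which is precisely the hypothesis encoded in Definition~\ref{de7.2}. I therefore expect the main obstacle to be fixing the convex-combination convention cleanly rather than establishing any nontrivial inequality.
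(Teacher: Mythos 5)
Your algebraic core --- adding the cone membership $-\big((1-\alpha)F(x')+\alpha F(x'')\big)\in C_{f(y)}$ to the $C_{f(y)}$-function membership \eqref{7.1} and cancelling the cross term --- is exactly the computation in the paper's proof, there phrased additively as $\exp_{f(y)}^{-1}f(\cdot)=\exp_{f(y)}^{-1}f(y)-c_{f(y)}$ with $\exp_{f(y)}^{-1}f(y)=0$. The genuine defect lies in your choice of combination point. Convexity of the set $G(y)\subseteq M$ means that for $x',x''\in G(y)$ the combination of $x'$ and $x''$ \emph{themselves} lies in $G(y)$; the paper accordingly forms $(1-\alpha)x'+\alpha x''$, asserts it belongs to $M$ (``since $M$ is convex''), and concludes $(1-\alpha)x'+\alpha x''\in G(y)$. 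You instead introduce $x_\alpha$ defined by $f(x_\alpha)=(1-\alpha)f(x')+\alpha f(x'')$ and prove $x_\alpha\in G(y)$. Two problems follow. First, such a point need not exist: nothing makes $f$ surjective onto chords of its image, and the chord point $(1-\alpha)f(x')+\alpha f(x'')$ is in general not even a point of $M$, so ``the preimage under $f$'' may be empty; your closing claim that well-definedness of $x_\alpha$ ``is precisely the hypothesis encoded in Definition~\ref{de7.2}'' is false --- Definition~\ref{de7.2} is only a cone-membership condition on tangent vectors and furnishes no point of $M$. Second, even granting that $x_\alpha$ exists, the membership $x_\alpha\in G(y)$ is not convexity of $G(y)$ unless $f$ carries domain combinations to image combinations (i.e., is affine), which is nowhere assumed.

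What your cancellation legitimately yields is only the tangent-space statement $-\exp_{f(y)}^{-1}\big((1-\alpha)f(x')+\alpha f(x'')\big)\in C_{f(y)}$. To obtain the proposition as stated you must, as the paper does, run the argument for the domain combination: take $(1-\alpha)x'+\alpha x''\in M$ and apply \eqref{7.1} in the reading under which it controls $\exp_{f(y)}^{-1}f\big((1-\alpha)x'+\alpha x''\big)$ against $(1-\alpha)F(x')+\alpha F(x'')$, whence $-\exp_{f(y)}^{-1}f\big((1-\alpha)x'+\alpha x''\big)\in C_{f(y)}$ and $(1-\alpha)x'+\alpha x''\in G(y)$. (To be fair, Definition~\ref{de7.2} read literally exhibits the same mismatch between $\exp_{f(y)}^{-1}\big((1-\alpha)f(x')+\alpha f(x'')\big)$ and $\exp_{f(y)}^{-1}f\big((1-\alpha)x'+\alpha x''\big)$; but the paper's proof commits to the latter reading, and that is the only reading under which Proposition~\ref{pr7.1} is a statement about convexity of $G(y)$. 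Your proof, as written, proves a conditional statement about a possibly nonexistent point instead.)
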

	
	\begin{proof}
	Since $x^{\prime},x^{\prime\prime}\in G(y)$, then there exists  $c_{f(y)}^{\prime},\;c_{f(y)}^{\prime\prime}\in C_{f(y)},$ such that
		$$\exp_{f(y)}^{-1}f(x^{\prime})=0-c_{f(y)}^{\prime}=\exp_{f(y)}^{-1}f(y)- c_{f(y)}^{\prime},$$ $$\exp_{f(y)}^{-1}f(x^{\prime\prime})=0-c_{f(y)}^{\prime\prime}=\exp_{f(y)}^{-1}f(y)-c_{f(y)}^{\prime\prime}.$$
		Because $C_{f(y)}$ is convex for any $\alpha\in[0,1]$, we have
		$$\check{c}_{f(y)}=(1-\alpha){c_{f(y)}}^{\prime}+\alpha{c_{f(y)}}^{\prime\prime}\in C_{f(y)}.$$
		Therefore, for any $\alpha\in[0,1]$, we can conclude that
		\begin{equation}\label{7.2}
			(1-\alpha)\exp_{f(y)}^{-1}f(x^{\prime})+\alpha \exp_{f(y)}^{-1}f(x^{\prime\prime})=\exp_{f(y)}^{-1} f(y)-\check{c}_{f(y)}.
		\end{equation}
		By \eqref{7.1}, there exists $\hat{c}_{f(y)}\in C_{f(y)},$
		such that
		$$\begin{aligned}
			\exp_{f(y)}^{-1} f\big((1-\alpha)x' + \alpha x''\big) 
			&= (1-\alpha) \exp_{f(y)}^{-1} f(x') + \alpha \exp_{f(y)}^{-1} f(x'') - \hat{c}_{f(y)} \\
			&= \exp_{f(y)}^{-1} f(y) - \check{c}_{f(y)} - \hat{c}_{f(y)} \\
			&= \exp_{f(y)}^{-1} f(y) - c_{f(y)},
		\end{aligned}$$
		since $M$ is convex, then $(1-\alpha)x^{\prime}+\alpha x^{\prime\prime}\in M.$  Additionally,  $c_{f(y)}=\check{c}_{f(y)}-\hat{c}_{f(y)}\in C_{f(y)}$, since ${C}_{f(y)}$ is a convex cone, and the penultimate equation follows from  \eqref{7.2}. For any  $x^\prime,x^{\prime\prime}\in G(y)$, it follows that
		$$(1-\alpha)x^\prime+\alpha x^{\prime\prime}\in G(y).$$
	\end{proof}

	Let $y\in M$ be a parameter, $p\in{C_{f(y)}}^*$ a given point and $$K(y)=\left\{x\in M|\exp_{f(y)}^{-1}f(x)\notin C_{f(y)},g(x)\geq0\right\},$$ the (scalar) quasi-minimum problem is to find $x\in M,$ such that:
	\begin{equation}\label{7.3}
		\begin{split}
			\min & \langle p, \exp_{f(y)}^{-1} f(x) \rangle, \\
			\text{s.t.} & \quad x \in K \cap G(y).
		\end{split}
	\end{equation}

	\begin{Definition}\label{de7.3}
		The feasible point $x^{\prime}\in K$ is called an efficient solution of the quasi-minimum problem \eqref{7.3} iff
		$$\langle p,\exp_{f(y)}^{-1}f(x)\rangle\nless\langle p,\exp_{f(y)}^{-1}f(x^{\prime})\rangle,\;\;\forall x\in K\cap G(y).$$
		For any $x\in K\cap G(y)$, the inequality here is equivalent to
		\begin{equation}\label{7.4}
			\langle p,\exp_{f(x)}^{-1}f(x^{\prime})\rangle\nless0.
		\end{equation}
	\end{Definition}
	
	\begin{Proposition}\label{pr7.2}
		If $\exp_{f(y)}^{-1}$ is a $C_{f(y)}-function$, $Q(x)=\langle p,\exp_{f(y)}^{-1}(x)\rangle $, $g$ is a concave, and $p\in{C_{f(y)}}^*$, then the set  $\{x\in M|\langle p,\exp_{f(x)}^{-1}f(x^{\prime})\rangle\nless0\}$ is convex.
	\end{Proposition}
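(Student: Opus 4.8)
The plan is to mirror the proof of Proposition \ref{pr7.1}: first upgrade the cone-convexity hypothesis on $F(x)=\exp_{f(y)}^{-1}f(x)$ to ordinary convexity of the scalarized objective $Q(x)=\langle p,\exp_{f(y)}^{-1}f(x)\rangle$, and then deduce convexity of the stated set by recognising it as a level set governed by $Q$. Throughout I read the hypothesis ``$\exp_{f(y)}^{-1}$ is a $C_{f(y)}$-function'' as $C_{f(y)}$-convexity of $F$ in the sense of Definition \ref{de7.2}, and $Q$ as $\langle p,\exp_{f(y)}^{-1}f(\cdot)\rangle$.

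First I would verify that $Q$ is convex on $M$. Fix $x_1,x_2\in M$, $\alpha\in[0,1]$, and put $x_\alpha=(1-\alpha)x_1+\alpha x_2$. The $C_{f(y)}$-convexity of $F$ supplies $\hat c\in C_{f(y)}$ with
$$F(x_\alpha)=(1-\alpha)F(x_1)+\alpha F(x_2)-\hat c.$$
Since $p\in C_{f(y)}^*$, Definition \ref{de2.3} gives $\langle p,\hat c\rangle\geq0$; pairing the identity with $p$ therefore yields
$$Q(x_\alpha)\leq(1-\alpha)Q(x_1)+\alpha Q(x_2),$$
so $Q$ is convex.

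Next I would identify the function that defines the set. Writing $h(x)=\langle p,\exp_{f(x)}^{-1}f(x')\rangle$, the equivalence recorded after \eqref{7.4} in Definition \ref{de7.3} relates $h(x)$ to the scalarization evaluated at the fixed base point $f(y)$; concretely $h(x)=Q(x')-Q(x)$. As $Q(x')$ is constant and $Q$ is convex, $h$ is \emph{concave}, and the stated set is precisely the superlevel set $\{x\in M\mid h(x)\geq0\}$ of this concave function. To conclude, take $x_1,x_2$ in the set, so $h(x_1)\geq0$ and $h(x_2)\geq0$; concavity of $h$ gives
$$h(x_\alpha)\geq(1-\alpha)h(x_1)+\alpha h(x_2)\geq0,$$
hence $x_\alpha$ lies in the set and the set is convex. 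Equivalently, the set coincides with the sublevel set $\{x\mid Q(x)\leq Q(x')\}$ of the convex $Q$, which is convex for the same reason.

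The hard part will be the bridge between the moving-base-point expression $\exp_{f(x)}^{-1}f(x')$ appearing in the set and the fixed-base-point scalarization $Q$ evaluated at $f(y)$: one must pin down, \emph{with the correct sign}, the identity $h(x)=Q(x')-Q(x)$ asserted in Definition \ref{de7.3}, using the tangent-space identification between $\exp_{f(x)}^{-1}f(x')$ and $\exp_{f(y)}^{-1}f(x')-\exp_{f(y)}^{-1}f(x)$. Once this relation and the convexity of $Q$ are secured, the concavity of $h$ and the convexity of its superlevel set are immediate. The hypothesis that $g$ is concave is not invoked for the set displayed in the statement; it serves to make the companion constraint region $\{x\mid g(x)\geq0\}$ convex, which is what is needed when this set is later intersected with the feasible set in the scalarized problem \eqref{7.3}.
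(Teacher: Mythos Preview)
Your first step---showing that $Q(x)=\langle p,\exp_{f(y)}^{-1}f(x)\rangle$ is convex by pairing the $C_{f(y)}$-convexity inequality \eqref{7.1} with $p\in C_{f(y)}^*$---is exactly what the paper does, line for line.

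The divergence is in what comes next. The paper does \emph{not} attempt to show directly that the displayed set $\{x\in M\mid \langle p,\exp_{f(x)}^{-1}f(x')\rangle\nless0\}$ is convex via a level-set argument. Instead it reads the proposition as the claim that problem \eqref{7.3} is a convex program, and accordingly proves two things: convexity of the objective $Q$ (done above) and convexity of the feasible region $K\cap G(y)$. For the latter it invokes the concavity of $g$ to get convexity of $K=\{x:g(x)\geq0\}$ and then appeals to Proposition \ref{pr7.1} for the convexity of $G(y)$. So the hypothesis ``$g$ concave'' is used essentially in the paper's argument, not set aside.

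Your level-set route is more literal about the statement, but it hinges entirely on the bridge identity you flag, and here there is a real danger. Definition \ref{de7.3} records the equivalence $Q(x)\nless Q(x')\Longleftrightarrow \langle p,\exp_{f(x)}^{-1}f(x')\rangle\nless0$; read at face value this gives $h(x)=Q(x)-Q(x')$, the \emph{opposite} sign to the one you assert. With that sign $h$ is convex, the set becomes the superlevel set $\{Q\geq Q(x')\}$ of a convex function, and your convexity conclusion would fail. Your geometric intuition ($\exp_{f(x)}^{-1}f(x')\approx F(x')-F(x)$, hence $h=Q(x')-Q(x)$) may well be what the authors intend, but it conflicts with the equivalence as written, so the ``hard part'' you identify is not merely a technicality---it decides whether the argument goes through at all. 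The paper sidesteps this entirely by targeting $Q$ and $K\cap G(y)$ rather than the literal set.
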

	
	\begin{proof}
		It must be shown that $Q(x)=\langle p,\exp_{f(y)}^{-1}(x)\rangle $ and $K\cap G(y)$ are convex. Since ${C_{f(y)}}^*$ and \eqref{7.1}, we have for any $ x^{\prime},x^{\prime\prime}\in M,$ and for any $\alpha\in[0,1]$, there are
		$$\langle p,(1-\alpha)\exp_{f(y)}^{-1}f(x^{\prime})+\alpha \exp_{f(y)}^{-1}f(x^{\prime\prime})-\exp_{f(y)}^{-1}((1-\alpha)f(x^{\prime})+\alpha f(x^{\prime\prime}))\rangle\geq0,$$
		or
		$$\langle p,\exp_{f(y)}^{-1}((1-\alpha)f(x^{\prime})+\alpha f(x^{\prime\prime}))\rangle\leq(1-\alpha)\langle p,\exp_{f(y)}^{-1}f(x^{\prime})\rangle+\alpha\langle p,\exp_{f(y)}^{-1}f(x^{\prime\prime})\rangle.$$
	This implies that the convexity of $\langle p,\exp_{f(y)}^{-1}(x)\rangle $, $M$ convexity and the concavity of $g$ can be deduced from the convexity of $K$. Furthermore, by  Proposition \ref{pr7.1}, we can establish the convexity of convexity of $G(y)$ and hence that of $K\cap G(y)$. 
	\end{proof}
	
	\begin{Proposition}\label{pr7.3}
		For any $y\in M$, if $p\in{C_{f(y)}}^*$,  then
		
	\begin{equation}\label{7.5}
		\left\{
		\begin{array}{ll}
			G(y) \subseteq G_p(y) & \text{(1)} \\
			y \in G(y) \cap G_p(y) & \text{(2)}
		\end{array}
		\right.
	\end{equation}

	\end{Proposition}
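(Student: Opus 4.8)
The plan is to reduce both assertions to two elementary facts: the normalization $\exp_{f(y)}^{-1}f(y)=0$, and the defining inequality of the polar cone $C_{f(y)}^{*}$. First I would record that, by Definition \ref{de2.1}, the inverse exponential map sends a point to itself to the zero tangent vector, so $\exp_{f(y)}^{-1}f(y)=0$. Consequently $\langle p,\exp_{f(y)}^{-1}f(y)\rangle=0$, and the description of $G_p(y)$ collapses to
$$G_p(y)=\{x\in M\mid \langle p,\exp_{f(y)}^{-1}f(x)\rangle\le 0\}.$$
This simplification is the organizing observation for the whole proof.

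For inclusion (1), I would take an arbitrary $x\in G(y)$, so that $-\exp_{f(y)}^{-1}f(x)\in C_{f(y)}$ by the definition of $G(y)$. Since $p\in C_{f(y)}^{*}$, the defining inequality of the polar cone (Definition \ref{de2.3}) applied to the element $-\exp_{f(y)}^{-1}f(x)\in C_{f(y)}$ yields $\langle p,-\exp_{f(y)}^{-1}f(x)\rangle\ge 0$, that is, $\langle p,\exp_{f(y)}^{-1}f(x)\rangle\le 0$. By the simplified description of $G_p(y)$ this is precisely $x\in G_p(y)$, which establishes $G(y)\subseteq G_p(y)$.

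For (2) I would first verify $y\in G(y)$: because $\exp_{f(y)}^{-1}f(y)=0$ and $C_{f(y)}$ is a convex cone, we have $-\exp_{f(y)}^{-1}f(y)=0\in C_{f(y)}$, so $y\in G(y)$. Membership $y\in G_p(y)$ then follows either directly from the trivial equality $\langle p,\exp_{f(y)}^{-1}f(y)\rangle=\langle p,\exp_{f(y)}^{-1}f(y)\rangle$, or simply by applying inclusion (1) to the point $y$; hence $y\in G(y)\cap G_p(y)$. There is no genuine obstacle here, and the argument is a short verification rather than a computation; the only point requiring a moment of care is the normalization $\exp_{f(y)}^{-1}f(y)=0$, since it simultaneously forces the constant term in $G_p(y)$ to vanish and places $y$ in both sets, thereby driving both parts of the statement.
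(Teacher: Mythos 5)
Your proof is correct and follows essentially the same route as the paper: both arguments reduce inclusion (1) to the polar-cone inequality $\langle p, c\rangle \ge 0$ for $c\in C_{f(y)}$ applied to $-\exp_{f(y)}^{-1}f(x)$, and both settle (2) via $0\in C_{f(y)}$ together with the trivial membership $y\in G_p(y)$. The only cosmetic difference is that you invoke the normalization $\exp_{f(y)}^{-1}f(y)=0$ at the outset to simplify $G_p(y)$, whereas the paper carries the term $\exp_{f(y)}^{-1}f(y)$ through the computation with an explicit $c_{f(y)}\in C_{f(y)}$.
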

	
	\begin{proof}
		Since $x\in G(y)$, we have exists $c_{f(y)}\in C_{f(y)}$ such that $$\exp_{f(y)}^{-1}f(x)=\exp_{f(y)}^{-1}f(y)-c_{f(y)}.$$ According to this equation, considering $p\in{C_{f(y)}}^*$ and $c_{f(y)}\in C_{f(y)}$, it can be deduced that $\langle p,c_{f(y)}\rangle\geq0$, for any $y\in M$, we have 
		$$\langle p,\exp_{f(y)}^{-1}f(x)\rangle=\langle p,\exp_{f(y)}^{-1}f(y)\rangle-\langle p,c_{f(y)}\rangle\leq\langle p,\exp_{f(y)}^{-1}f(y)\rangle.$$
		The first equation of \eqref{7.5} has been proven. $0\in C_{f(y)}$ is equivalent to $y\in G(y)$. Additionally,  $y\in G_p(y)$ is trivial. Therefore, the second equation of \eqref{7.5} holds.
	\end{proof}
	
We will now present several properties that could be useful in defining a method for finding one or all the solutions to \eqref{question} by solving
	\eqref{7.3} .
	\begin{Proposition}\label{pr7.4}
		Let $p\in \text{int}C_{f(y)}^*$ be fixed, then \eqref{EQ4} impossible, and hence $y$ is an efficient solution of \eqref{question} iff for any $x\in M$
		
		\begin{equation}\label{7.6}
			\langle p,\exp_{f(y)}^{-1}f(x)\rangle>0,\;\exp_{f(y)}^{-1}f(x)\in C_{f(y)},\;g(x)\geq0,
		\end{equation}
		is impossible. Moreover, the impossibility of \eqref{7.6} is a necessary and sufficient condition for $y$ to be a (scalar) minimum point of \eqref{7.3}. 
	\end{Proposition}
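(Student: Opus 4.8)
The plan is to split the stated equivalence into two halves, both resting on a single scalarization property of points in the interior of the polar cone. First I would record the property that for $p\in\text{int}\,C_{f(y)}^{*}$ one has $\langle p,w\rangle>0$ for every $w\in C_{f(y)}\setminus\{0\}$. Since $C_{f(y)}=P_{f(y),p}C_{p}$ is, like $C_{p}$, a closed convex cone with nonempty interior (the parallel transport being a linear isometry of $T_{p}M$ onto $T_{f(y)}M$), the hypothesis $p\in\text{int}\,C_{f(y)}^{*}$ is meaningful, and $p\in C_{f(y)}^{*}$ already gives $\langle p,w\rangle\ge 0$ on all of $C_{f(y)}$. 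If $\langle p,w_{0}\rangle=0$ held for some $w_{0}\in C_{f(y)}\setminus\{0\}$, then the closed half-space $H=\{q\in T_{f(y)}M:\langle q,w_{0}\rangle\ge 0\}$ would contain $C_{f(y)}^{*}$ while having $p$ on its bounding hyperplane; but a whole neighbourhood of $p$ lies in $C_{f(y)}^{*}\subseteq H$, and the points $p-\epsilon w_{0}$ leave $H$, contradicting $p\in\text{int}\,C_{f(y)}^{*}$. This single observation is the engine that converts the cone membership $\exp_{f(y)}^{-1}f(x)\in C_{f(y)}\setminus\{0\}$ in the efficiency condition \eqref{EQ3} into the scalar strict inequality $\langle p,\exp_{f(y)}^{-1}f(x)\rangle>0$ in \eqref{7.6}.

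Second, I would prove that $y$ is efficient for \eqref{question} if and only if \eqref{7.6} is impossible. By Definition \ref{de3.2} (equivalently, by the characterization through the impossibility of \eqref{EQ4} recorded after \eqref{EQ3}, and by Theorem \ref{th4.1}), $y$ is efficient precisely when there is no $x\in M$ with $g(x)\ge 0$ and $\exp_{f(y)}^{-1}f(x)\in C_{f(y)}\setminus\{0\}$. If such an $x$ exists, then $\exp_{f(y)}^{-1}f(x)\in C_{f(y)}$ and $g(x)\ge 0$, while the first step yields $\langle p,\exp_{f(y)}^{-1}f(x)\rangle>0$, so \eqref{7.6} is solvable. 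Conversely, any solution $x$ of \eqref{7.6} satisfies $\langle p,\exp_{f(y)}^{-1}f(x)\rangle>0$, which forces $\exp_{f(y)}^{-1}f(x)\ne 0$; combined with $\exp_{f(y)}^{-1}f(x)\in C_{f(y)}$ and $g(x)\ge 0$ this exhibits a point violating \eqref{EQ3}. Hence the two impossibilities coincide, establishing the first assertion.

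Third, I would identify ``\eqref{7.6} impossible'' with ``$y$ is a scalar minimum of \eqref{7.3}''. By the second part of Proposition \ref{pr7.3}, $y\in G(y)\cap G_{p}(y)$, and since $y\in K$ it is feasible for \eqref{7.3}; moreover $\exp_{f(y)}^{-1}f(y)=0$ by Definition \ref{de2.1}, so the objective of \eqref{7.3} at $y$ equals $\langle p,0\rangle=0$. Unfolding Definition \ref{de7.3} with $x'=y$, the point $y$ is a minimum of \eqref{7.3} iff $\langle p,\exp_{f(y)}^{-1}f(x)\rangle\nless 0$ for every feasible $x$, i.e.\ no feasible competitor strictly lowers the objective below the value $0$ attained at $y$. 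The plan is then to show that the existence of such a strictly improving competitor is exactly the solvability of \eqref{7.6}, using the scalarization property of the first step to pass between the strict scalar inequality and the cone membership $\exp_{f(y)}^{-1}f(x)\in C_{f(y)}\setminus\{0\}$, and using the inclusion $G(y)\subseteq G_{p}(y)$ of Proposition \ref{pr7.3} to control the interplay between the feasibility requirement $x\in K\cap G(y)$ and the conditions appearing in \eqref{7.6}. Chaining the three reductions yields the full chain: \eqref{EQ4} impossible $\Leftrightarrow$ $y$ efficient for \eqref{question} $\Leftrightarrow$ \eqref{7.6} impossible $\Leftrightarrow$ $y$ is a scalar minimum of \eqref{7.3}. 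The hard part will be exactly this last step: beyond the half-space separation of the first paragraph, one must track the cone directions carefully, since the infeasibility system \eqref{7.6} is phrased with $C_{f(y)}$ whereas the constraint set $G(y)$ of the scalar problem is phrased with $-C_{f(y)}$, and reconciling the two through Proposition \ref{pr7.3} and the identity $\exp_{f(y)}^{-1}f(y)=0$ is where the bookkeeping must be done with care; once the strict positivity $\langle p,w\rangle>0$ on $C_{f(y)}\setminus\{0\}$ is secured, everything else is routine sign-chasing from $p\in C_{f(y)}^{*}$.
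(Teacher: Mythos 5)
Your first two steps are correct and coincide with the paper's own argument: the paper's proof of the first assertion is exactly your step 2 (possibility of \eqref{7.6} forces $\exp_{f(y)}^{-1}f(x)\neq 0$, hence possibility of \eqref{EQ4}; conversely $p\in\operatorname{int}C_{f(y)}^{*}$ upgrades membership in $C_{f(y)}\setminus\{0\}$ to the strict inequality), and your half-space proof of the strict positivity $\langle p,w\rangle>0$ on $C_{f(y)}\setminus\{0\}$ supplies, correctly, the one lemma the paper uses without proof.

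The genuine gap is your third step: for the equivalence between the impossibility of \eqref{7.6} and scalar minimality of $y$ in \eqref{7.3} you give only a plan, deferring the decisive point to ``routine sign-chasing'' — and that sign-chasing is precisely where the argument cannot be completed as you describe. On the feasible set $K\cap G(y)$ of \eqref{7.3} one has $-\exp_{f(y)}^{-1}f(x)\in C_{f(y)}$, hence $\langle p,\exp_{f(y)}^{-1}f(x)\rangle\leq 0$ for every feasible point; so a solution of \eqref{7.6}, which demands $\langle p,\exp_{f(y)}^{-1}f(x)\rangle>0$ together with membership in $+C_{f(y)}$, is never feasible for \eqref{7.3} when the cone is pointed, and conversely a feasible $x$ with $\exp_{f(y)}^{-1}f(x)=-c$, $c\in C_{f(y)}\setminus\{0\}$, $\langle p,c\rangle>0$, has objective value strictly below the value $0$ attained at $y$ (destroying scalar minimality) while leaving \eqref{7.6} impossible. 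Thus the system that matches scalar minimality of $y$ is $\langle p,\exp_{f(y)}^{-1}f(x)\rangle<0$ over $x\in K\cap G(y)$, a genuinely different system from \eqref{7.6}, not a rewriting of it. This is exactly the move the paper's one-line proof makes silently when it ``replaces the first equation of \eqref{7.6} equivalently with $\langle p,\exp_{f(y)}^{-1}f(x)\rangle<\langle p,\exp_{f(y)}^{-1}f(y)\rangle$'' (note $\exp_{f(y)}^{-1}f(y)=0$, so this replacement reverses, rather than preserves, the inequality). You deserve credit for detecting the $C_{f(y)}$ versus $-C_{f(y)}$ mismatch that the paper glosses over, but flagging the obstruction is not resolving it: to finish step 3 you would have to either prove the statement for the mirrored system (in effect adopting the paper's implicit sign flip, which ties scalar minimality to efficiency with respect to $-C_{f(y)}$ rather than Definition \ref{de3.2}) or amend the definition of $G(y)$ or of \eqref{7.3}. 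As written, your proposal establishes only the first assertion of the proposition.
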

	
	\begin{proof}
		Since $\langle p,\exp_{f(y)}^{-1}f(x)\rangle>0$, we have $\exp_{f(y)}^{-1}f(x)\neq0$. Consequently, the possibility of \eqref{7.6} implies that of \eqref{EQ4}. The first equation of \eqref{EQ4}, combined with $p\in \text{int}{C_{f(y)}}^*$, implies the first equation of \eqref{7.6}. Therefore,  possibility of \eqref{EQ4} implies that of \eqref{7.6}.
		
		By replacing the first equation of \eqref{7.6} equivalently with $\langle p,\exp_{f(y)}^{-1}f(x)\rangle<\langle p,\exp_{f(y)}^{-1}f(y)\rangle $, we have the second part of the statement immediately.
	\end{proof}
	
	\begin{Proposition}\label{pr7.5}
		For any $y^0\in M,$
		\begin{equation}\label{7.7}
			x^0\in G(y^0)\Rightarrow G(x^0)\subseteq G(y^0).
		\end{equation}
	\end{Proposition}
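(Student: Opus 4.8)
The plan is to reduce the nesting claim $G(x^0)\subseteq G(y^0)$ to the additivity of a single convex cone, mirroring the Euclidean prototype in which $G(y)=\{x\mid f(y)-f(x)\in C\}$ and transitivity is immediate from $C+C\subseteq C$. First I would unpack the two hypotheses. Write $a=f(y^0)$ and $b=f(x^0)$. The assumption $x^0\in G(y^0)$ means $-\exp_{a}^{-1}b\in C_{a}$, so there is $c^0\in C_{a}$ with $\exp_{a}^{-1}b=-c^0$. Now fix an arbitrary $z\in G(x^0)$ and put $c=f(z)$; then $-\exp_{b}^{-1}c\in C_{b}$, so there is $c'\in C_{b}$ with $\exp_{b}^{-1}c=-c'$. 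The goal is to exhibit an element of $C_{a}$ equal to $-\exp_{a}^{-1}c$, which is exactly the assertion $z\in G(y^0)$; since $z$ is arbitrary this yields the inclusion.

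Second, I would exploit that every cone in the family is a parallel transport of the fixed cone $C_p$: because $C_{a}=P_{a,p}C_p$ and $C_{b}=P_{b,p}C_p$, and the parallel transport along the minimal geodesic is a linear isometry, the transported vector $P_{a,b}\,c'$ again lies in $C_{a}$. Thus both $c^0$ and $P_{a,b}\,c'$ are members of the one closed convex cone $C_{a}\subset T_{a}M$.

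Third, the crux is to relate $\exp_{a}^{-1}c$ to $\exp_{a}^{-1}b$ and the transported vector $P_{a,b}\exp_{b}^{-1}c$. I would invoke the geodesic composition identity
\[
\exp_{a}^{-1}c=\exp_{a}^{-1}b+P_{a,b}\,\exp_{b}^{-1}c,
\]
which plays the role of the triangle law $c-a=(b-a)+(c-b)$ of the flat case. Granting this,
\[
\exp_{a}^{-1}c=-c^0-P_{a,b}\,c'=-\bigl(c^0+P_{a,b}\,c'\bigr),
\]
and since $C_{a}$ is a convex cone it is closed under addition, so $c^0+P_{a,b}\,c'\in C_{a}$. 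Hence $-\exp_{a}^{-1}c\in C_{a}$, that is $z\in G(y^0)$, completing the argument.

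The main obstacle is precisely this third step, together with the transport invariance used in the second: the additivity identity for $\exp^{-1}$ and the equality $P_{a,b}C_{b}=C_{a}$ are exact only in flat space, whereas on a curved local sphere both the geodesic triangle relation and parallel transport are curvature-sensitive (transport is path-dependent, so $P_{a,b}P_{b,p}$ need not equal $P_{a,p}$). The honest difficulty is therefore to justify these two composition rules in the present setting — either by restricting to a geodesically convex neighbourhood small enough that $a,b,c$ lie on a common minimal geodesic (so $\exp_{a}^{-1}$ behaves additively and the holonomy is trivial), or by imposing the $C_{f(y)}$-function and convexity hypotheses already used in Proposition~\ref{pr7.1} to absorb the curvature correction into the cone $C_{a}$. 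Everything else, namely the invariance of a cone under its defining transport and the closure of a convex cone under addition, is routine.
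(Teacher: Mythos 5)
Your skeleton is exactly the paper's proof: unpack $x^0\in G(y^0)$ and an arbitrary $\hat x\in G(x^0)$ into two cone memberships, add the resulting identities, and invoke the fact that the convex cone is closed under addition. The only difference is bookkeeping honesty. The paper's proof writes both memberships at a single base point, asserting that $\hat x\in G(x^0)$ yields $\exp_{f(y)}^{-1}f(\hat x)=\exp_{f(y)}^{-1}f(x^0)-\dot c_{f(y)}$ with $\dot c_{f(y)}\in C_{f(y)}$, and then literally sums the two equalities (it even misprints the hypothesis as $\hat x\in G(y^0)$). But the definition of $G(x^0)$ only gives $-\exp_{f(x^0)}^{-1}f(\hat x)\in C_{f(x^0)}$, a statement in the tangent space at $f(x^0)$, not at $f(y^0)$; so the paper's summation step is precisely the conjunction of the two identities you flag --- the triangle law $\exp_a^{-1}c=\exp_a^{-1}b+P_{a,b}\exp_b^{-1}c$ and the transport compatibility $P_{a,b}C_b=C_a$ --- asserted silently.

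The difficulty you isolate in your final paragraph is therefore genuine, and the paper does not resolve it either: on a curved local sphere the additivity of $\exp^{-1}$ fails, and since $C_b=P_{b,p}C_p$ while parallel transport has nontrivial holonomy, $P_{a,b}C_b=P_{a,b}P_{b,p}C_p$ need not coincide with $C_a=P_{a,p}C_p$. Consequently neither your argument nor the printed one is complete as stated; both require an extra hypothesis of the kind you suggest, e.g.\ that $f(y^0)$, $f(x^0)$, $f(\hat x)$ lie on a common minimal geodesic, or a curvature-absorbing assumption in the spirit of the $C_{f(y)}$-function property of Definition~\ref{de7.2} used in Proposition~\ref{pr7.1} --- though note that Proposition~\ref{pr7.5} carries no such hypothesis, and even the $C_{f(y)}$-function condition compares values at one fixed base point $f(y)$, so it does not by itself control the change of base point from $f(x^0)$ to $f(y^0)$. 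In short, your write-up is the paper's own argument with its hidden flat-space assumptions made explicit; it is not a complete proof, but as a diagnosis of what the proposition actually needs it is more accurate than the proof the paper gives.
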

	
	\begin{proof}
		Since $x^0\in G(y^0)$, we have exists $\breve{c}_{f(y)}\in C_{f(y)}$ such that  $$\exp_{f(y)}^{-1}f(x^0)=\exp_{f(y)}^{-1}f(y^0)-\check{c}_{f(y)}.$$
		
		Since $\hat{x}\in G(y^0)$, we have $$\exists\dot{c}_{f(y)}\in C_{f(y)}$$ such that  $\exp_{f(y)}^{-1}f(\hat{x})=\exp_{f(y)}^{-1}f(x^0)-\dot{c}_{f(y)}.$
		
		By summing the two equalities, we obtain $$\exp_{f(y)}^{-1}f(\hat{x}) =\exp_{f(y)}^{-1}f(y^0) - c_{f(y)},$$ where $c_{f(y)}= \check{c}_{f(y)} + \dot{c}_{f(y)} \in C_{f(y)}$, since $C$ is a convex cone. 
		
		It follows that $\hat{x} \in G(y^0)$ and hence $G(x^0) \subseteq G(y^0)$. 
	\end{proof}
	
	\begin{Proposition}\label{pr7.6}
		If $x^{0}$ is an efficient solution of \eqref{7.3} at $y=y^0$, then $x^{0}$ is an efficient solution of \eqref{7.3} also at $y=x^0$.
	\end{Proposition}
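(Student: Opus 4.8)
The plan is to deduce efficiency of $x^0$ at the new parameter $y=x^0$ directly from the efficiency assumed at $y=y^0$, using the nesting of feasible regions provided by Proposition \ref{pr7.5} together with the self-containment $x^0\in G(x^0)$ from Proposition \ref{pr7.3}. The decisive observation is that, in the equivalent form \eqref{7.4} of the efficiency condition, the pointwise inequality $\langle p,\exp_{f(x)}^{-1}f(x^0)\rangle\nless0$ does not involve the parameter $y$ at all; the parameter enters only through the region $K\cap G(y)$ over which $x$ is allowed to range. Consequently, shrinking that region can only make the condition easier to meet, and the whole argument reduces to a set inclusion.

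First I would record the hypothesis. Since $x^0$ is an efficient solution of \eqref{7.3} at $y=y^0$, it is in particular feasible, so $x^0\in K\cap G(y^0)$, whence $x^0\in K$ and $x^0\in G(y^0)$; moreover, by \eqref{7.4},
$$\langle p,\exp_{f(x)}^{-1}f(x^0)\rangle\nless0,\qquad\forall\,x\in K\cap G(y^0).$$
Next I would switch the parameter to $y=x^0$ by establishing two facts. From $x^0\in G(y^0)$ and Proposition \ref{pr7.5} I obtain $G(x^0)\subseteq G(y^0)$, hence $K\cap G(x^0)\subseteq K\cap G(y^0)$. From Proposition \ref{pr7.3} I obtain $x^0\in G(x^0)$, and combined with $x^0\in K$ this shows that $x^0$ is feasible for \eqref{7.3} at $y=x^0$. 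Finally, because $K\cap G(x^0)\subseteq K\cap G(y^0)$, the displayed inequality holds a fortiori for every $x\in K\cap G(x^0)$, which is exactly the condition \eqref{7.4} characterizing efficiency of $x^0$ at $y=x^0$; therefore $x^0$ is an efficient solution of \eqref{7.3} also at $y=x^0$.

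The only delicate point, and hence the step I expect to require the most care, is the choice to argue through the parameter-free characterization \eqref{7.4} rather than through the objective $\langle p,\exp_{f(y)}^{-1}f(\cdot)\rangle$, whose base point $f(y)$ genuinely changes with the parameter. Working with the objective directly would force a comparison of the two functionals across the distinct base points $f(y^0)$ and $f(x^0)$, which is awkward on the sphere; passing to \eqref{7.4} removes this difficulty and lets the feasible-set nesting of Proposition \ref{pr7.5} carry the entire proof. I would therefore make sure at the outset that the equivalence between Definition \ref{de7.3} and \eqref{7.4} is invoked cleanly, since it is precisely this reformulation that makes the result almost immediate.
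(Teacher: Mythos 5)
Your proof is correct and takes essentially the same route as the paper's: both arguments are driven by the inclusion $G(x^0)\subseteq G(y^0)$ from Proposition \ref{pr7.5} (the paper merely phrases it as a contradiction, transporting a hypothetical improving point $\hat{x}\in K\cap G(x^0)$ into $K\cap G(y^0)$, while you apply the same nesting directly). Your version is in fact slightly more careful than the paper's, since you explicitly verify feasibility $x^0\in G(x^0)$ via Proposition \ref{pr7.3} and work with the parameter-free characterization \eqref{7.4}, thereby avoiding the base-point ambiguity in the paper's display \eqref{7.8}, where $\exp_{f(y)}^{-1}$ is written without specifying whether $y=y^0$ or $y=x^0$.
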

	
	\begin{proof}
		Proof by contradiction, suppose that $x^0$ be not an efficient solution of \eqref{7.3} at $y=x^0$. Then
		\begin{equation}\label{7.8}
			\exists\hat{x}\in K\cap G(x^0)\quad\mathrm{~s.t.~}\langle p,\exp_{f(y)}^{-1}f(x^0)\rangle>\langle p,\exp_{f(y)}^{-1}f(\hat{x})\rangle.
		\end{equation}
		
		Since Proposition \ref{pr7.5}, if $x^0 \in G(y^0)$, then $G(x^0) \subseteq G(y^0)$, this inclusion relation and \eqref{7.8} imply 
		$$\hat{x}\in K\cap G(x^0) \;\;\text{and}\;\; \langle p,\exp_{f(y)}^{-1}f(x^0)\rangle>\langle p,\exp_{f(y)}^{-1}f(\hat{x})\rangle,$$ 
		which contradiction the assumption. 
	\end{proof}
		
	Proposition \ref{pr7.6} presents a method for solving the GOP of  \eqref{question}. We first choose any $p\in \text{int}{C_{f(y)}}^*$, with $p$ held constant in the subsequent process. Then, we choose any $y^0\in K$ and solve problem \eqref{7.3} at $y = y^0$ (scalar problem). If there exists a solution $x^0$ (if there is one), then by Proposition \ref{pr7.6}, $x^0$ is an efficient solution to  \eqref{question}. If we want to find all solutions of  \eqref{question}, which happens, for example, when optimising the set of efficient solutions of  \eqref{question} for a given function, we have to consider  \eqref{7.3} as a parametric problem with respect to $y$. Proposition \ref{pr7.4} and \eqref{7.6} ensure that we have access to all solutions of  \eqref{question}. It is worth noting that this scalarisation method does not require any assumptions on \eqref{question}.
	
	\section{Conclusions}\noindent
	\setcounter{equation}{0}
	This paper explores GOPs on a local sphere using the cone order relation in tangent spaces. The analysis primarily employs ISA, leveraging two classes of weak separation functions to characterize the sufficient and necessary optimality conditions for GOP. Last, we establish a scalar minimization problem. It is believed that there are a large number of interesting problems worthy of study in the scope of ISA on local sphere. For instance, since $C_{x}$ is a cone in the tangent space, the set defined as $\widetilde{C}=\{\exp_xc_x\;|\;\forall c_x\in C_x\}$ warrants thorough exploration. Additionally, the solutions of variational inequalities on the local sphere constitute an important topic. We foresee further progress in this topic in the nearby future.
	\begin{acknowledgement}
		The research presented in this paper has been supported by the High Performance Computing Center, Southwest Petroleum University. This work was supported by the National Natural Science Foundation of China (No: 11901484, 12171377) and Natural Science Starting Project of SWPU (No. 2023QHZ007).
	\end{acknowledgement}

\end{document}